\newtheorem{definition}{Definition}[section]
\newtheorem{lemma}[definition]{Lemma}
\newtheorem{theorem}[definition]{Theorem}
\newtheorem*{remark*}{Remark}
\theoremstyle{definition}
\numberwithin{equation}{section}
\numberwithin{figure}{section}
\begin{document}

%


    %
    %

\title{Convergence of Lebenberg-Marquard method for the Inverse Problem with an Interior Measurement}

\author{Yu Jiang$^{1}$, Gen Nakamura$^{2}$
\\$^1$School of Mathematics, Shanghai University of Finance and Economics,\\
 Shanghai 200433, P.R. China\,
\\E-mail: jiang.yu@mail.shufe.edu.cn
\\$^2$Department of Mathematics, Hokkaido University,\\
Sapporo 060-0810, Japan\,
\\E-mail: nakamuragenn@gmail.com
}
\date{}
  
\maketitle

\begin{abstract} The convergence of Levenberg-Marquard method is discussed for the inverse problem to reconstruct the storage modulus and loss modulus for the so called scalar model by single interior measurement. The scalar model is the most simplest model for data analysis used as the modeling partial differential equation in the diagnosing modality called the magnetic resonance elastography which is used to diagnose for instance lever cancer.  The convergence of the method is proved by showing that the measurement map which maps the above unknown moduli to the measured data satisfies the so called the tangential cone condition. The argument of the proof is quite general and in principle can be applied to any similar inverse problem to reconstruct the unknown coefficients of the model equation given as a partial differential equation of divergence form by one single interior measurement. The performance of the method is numerically tested for the two layered picewise homogneneous scalar model in a rectangular domain.
\end{abstract}

{\bf Key words} Levenberg-Marquard method, convergence, tangential cone condition, interior measurement, magnetic resonance elastography, storage modulus, loss modulus

{\bf Key words} 35R30, 65N21

\section{Introduction}\label{Introduction}
Let $\Omega\subset \mathbb{R}^n$ ($n=2\,\mbox{or}\,3)$ be a bounded domain with Lipschitz smooth boundary $\partial \Omega$ and
\[\label{eq::gamma}
\gamma(x):=G'(x)+iG''(x)\,\text{with }i=\sqrt{-1},
\]
where $G'(x)$ and $G''(x)$ are real valued bounded measurable functions on $\Omega$ which satisfy the positivity conditions:
\[\label{eq::positivity}
0<\lambda_1\leq G',\,G''\leq \lambda_2\,\,(\mbox{a.e.}\,x\in\Omega)
\]
with some positive constants $\lambda_1$ and $\lambda_2$. Further, let $\rho(x)\in L^\infty(\Omega)$ satisfy $0<\delta_1\leq \rho(x)\leq \delta_2$ $(\mbox{a.e.}\,x\in \Omega$) with some positive constants $\delta_1$ and $\delta_2$.

It is well known that for any given Dirichlet data $g(x)\in H^{1/2}(\partial \Omega)$, there exists a unique $u(x)\in H^1(\Omega)$ to the boundary value problem
\begin{equation}\label{eq::fp}
\left \{
\begin{array}{ll}
\nabla\cdot\big [\gamma(x)\nabla u(x)\big ]+\rho\omega^2u(x)=0 &\mbox{in } \Omega\\
u(x)=g(x)  &\mbox{on } \partial \Omega,
\end{array}
\right.
\end{equation}
where $\omega>0$ is a given constant. We note that this follows from the positivity of  $-\nabla\cdot(G''\nabla\cdot)$ with Dirichlet boundary condition. It could be the positivity of $-\nabla\cdot(G'\nabla\cdot)$ with Dirichlet condition if there is no lower order term in the equation of  \eqref{eq::fp}. We will refer this as {\it positivity}. For further argument on this
see Chapter 3 of \cite{Mizohata} or \cite{jiangsiam} for even more details. Also concerning the Lipschitz smoothness of $\partial\Omega$ for our boundary value problem, see \cite{Grisvard}.

This boundary value problem \eqref{eq::fp} is the simplest model called the {\it scalar model} for a recent diagnosing modality called MRE (Magnetic Resonance Elastography, see for example \cite{muth,Mad} ) in which $u(x)$ describes a component of the displacement vector of a shear wave with attenuation in a human tissue. The equation of  \eqref{eq::fp} is sometimes called scalar model for MRE. $G'$ and $G''$ are called the {\it storage} and {\it loss} moduli of the tissue. Further $\rho$ describes the density of the tissue which can be taken equal to that of water i.e. $\rho=1000 \,\mbox{\rm kg}/\mbox{m}^3$ and $\omega,\,g$ are the frequency and a component of the displacement vector input to the human body. In the rest of this paper we assume that $\rho$ is equal to the above constant for simplicity.

The hardware of MRE consists of a MRI and vibration system. A time harmonic vibration excited by this vibration system is syncronized to a pulse sequence of MRI so that MRE can measure the displacement vector of a shear wave inside a human tissue. The above $u$ and $g$ in \eqref{eq::fp} are the component of the displacement vector of time harmonic vibration in $\Omega$ and at $\partial\Omega$, respectively. Especially the diplacement $g$ at $\partial\Omega$ is given  by a probe attached at some part of $\partial\Omega$ connected with a bar made of glass fiber-reinforced plastic (GFRP) which propagates the vibration excited by the vibration system of MRI (see \cite{fujisaki}). Away from the place the probe is attached we should have to give Neumann boundary condition. But to simplify the description, we just consider Dirichlet boundary condition given on the whole $\partial\Omega$. This so called elastogram of MRE is to recover $G^\prime$ from the {\it MRE measured data} $u(x)\,(x\in\Omega)$. This is an inverse problem with single interior measurement. A similar inverse problem can be seen in mathematically ideal form of inverse problem for ground water hydrology (\cite{hanke}).

The importance of MRE is that it can realize doctors' palpation inside a human body which had been dreamed by doctors for a long time. Although the hardware of MRE is developing very quickly, the elastogram has not yet developed enough and there are so many challenging questions for elastogram. For further details of MRE and its elastogram, we can refer to, for example, \cite{jiangsiam, Amm} for mathematical modeling, \cite{NJ,Bal, AmmS, Hig} for theorical inversion analysis and \cite{AmmN,Seo,houten04,JiangP,mcn2} for numerical reconstruction schemes.

The precise formulation of this inverse problem is as follows.

\medskip
\noindent {\bf Inverse Problem}:

Recover $\gamma$ (i.e. the storage modulus $G' $ and loss modulus $G''$) from the {\it MRE measured data} $u(x)$ excited by given boundary input $g$ when the frequency $\omega$ is known.

\medskip
Here it should be remarked that our MRE measured data is a single interior measurment. Likewise for any inverse problem, the basic questions for this inverse problem are the uniqueness, stability and reconstruction of identifying $G'$ from the MRE measured data. There isn't any complete uniqueness and stability. What have been known for them so far is as follows for the case $\gamma\in C^1(\overline\Omega)$ with given non-identically zero input $g\in H^{3/2}(\partial\Omega)$ and when $\gamma\big|_{\partial\Omega}$ is known. That is suppose there are finitely disjoint closed analytic manifolds of codimension one which are curves for $n=2$ and surfaces for $n=3$ compactly embedded inside $\Omega$ and $\gamma\in C^1(\overline\Omega)$ is analytic inside and outside these manifolds. Further, $\gamma$ can be extended analytically up to these manifolds from inside and outside of them. If a perturbed $\widetilde\gamma$ of $\gamma$ satisfies the admissibility condition given by
\[\label{admissibility condition}
|\mbox{Im}(\gamma-\widetilde\gamma)(x)|\le\tan(\kappa)|\mbox{Re}(\gamma-\widetilde\gamma)(x)|\,\,(x\in\overline\Omega),
\]
where $\kappa<(\pi-\sigma)/2$ with a small $\sigma>0$, a local H\"older conditional stability estimate has been recently proved in \cite{HMN}.  The H\"older exponent and constant in the estimate only depend on $\gamma$ and $\Vert\widetilde\gamma\Vert_{C^1(\overline\Omega)}$. As a corollary of this result, if $G''$ is known, then the admissibility condition is satisfied and hence the global uniqueness for identifying $G'$ follows.

Despite the lack of complete uniqueness which exactly fit to our case, we are particulary interesting in a mathematically rigorous reconstruction of $\gamma$ even in the case $\gamma\not\in C^1(\overline\Omega)$ from the practical point of view. As a recostruction scheme to identify $\gamma$ from single interior measurement i.e. just measure
$u\big|_\Omega$, we will give a Newton type regularization scheme called Levenberg-Marquardt iterate and prove that the so-called tangential cone condition (\cite{hanke,KNO}) holds, which is the key to show the convergence of this scheme. We would like to emphasize here that the inverse problem with a single interior measurement for any elliptic equation whose boundary value problem has the above mentioned {\it positivity} satisfies the tangential cone condition. 
We will also provide several numerical tests of our scheme for different cases.

The rest of this paper is organized as follows. In Section 2 we review the Levenbert-Marquardt method for the operator equation $F(x)=y$ in $x$ for a given $y$ from \cite{hanke,KNO}. After this in Section 3, we put our inverse problem into such an operator equation and compute the Fr\'echet derivative of $F$
in Subsection 3.1. Then in Subsection 3.2, we show that $F$ satisfies the tangential cone condition. Section 4 is devoted to the numerical study of
our reconstruction scheme for our inverse problem based on the convergence of Levenberg-Marquadt method.  In final section, some discussions and a conclusion are given.

\section{The Levenberg - Marquardt Method}\label{sec::LMM}
Any nonlinear inverse problem can be treated as a nonlinear operator equation
\begin{equation}\label{eq::nonlinear op}
F(x)=y
\end{equation}
with respect to $x$, where $F:D(F)\subset \mathcal{X}\to \mathcal{Y}$ is a differentiable operator between Hilbert space $\mathcal{X}$ and $\mathcal{Y}$. In practice, we can only measure a noisy data $y^{\delta}$ which satisfies
\[\label{eq::noisy level}
\|y^{\delta}-y\|\leq \delta
\]
with a noise level $\delta$. By knowing $y^{\delta}$, we always need to get a good approximation of the true solution $x^{\dag}$ which satisfies $F(x^{\dag})=y$.

In practice, it is necessary to find some fast method to solve it. Newton type iterative methods are good for this purpose. Having $x_k$ after $k$ iterations, this iteration updates $x_k$ to $x_{k+1}=x_k+h$, by solving
\[\label{eq::linearized}
F'(x_k)h=y^{\delta}-F(x_k)
\]
with respect to $h$, where $F'(\gamma_k)$ is the Fr\'{e}chet derivative of $F$ at $x_k$. Here, we have to concern that these linearized problems are ill-posed. If the Tikhonov regularization is applied to this ill-posed linearized problem by adding the regularization term $\alpha_k\|h\|^2$, we have the {\it Levenberg - Marquardt method} (LM method) with the following iteration procedure (cf. \cite{hanke,KNO})
\[\label{eq::LMmethod}
x_{k+1}=x_k+\big (F'(x_k)^*F'(x_k)+\alpha_k I\big )^{-1}F'(x_k)^*\big(y^{\delta}-F(x_k)\big)\quad (k\in\{0\}\cup \mathbb{N}),
\]
where $x_k$ is uniquely determined by the Morozov descrepancey principle i.e.
$x_{k+1}-x_k$ is the minimum norm solution of
\begin{equation}\label{eq::Morozov}
\Vert y^\delta-F(x_k)-F'(x_k)(x_{k+1}-x_k)\Vert=q\Vert y^\delta-F(x_k)\Vert
\end{equation}
with any fixed $0<q<1$.

Assume that a true solution $x^{\dag}$ exists in an open ball $B_R(x_0)\subset D(F)$ with radius $R>0$ centered at $x_0$ for an {\it initial guess $x_0$}.
Further, we assume that $F'$ is uniformly bounded in $B_R(x_0)$ and satisfies the  {\it tangential cone condition}:
\begin{equation}\label{eq::tcc}
\|F(x)-F(\widetilde{x})-F'(x)(x-\widetilde{x})\|\leq c\|x-\widetilde{x}\|\|F(x)-F(\widetilde{x})\|,
\end{equation}
for any $x$ and $\widetilde{x}\in B_R(x_0)\subset D(F)$. Then, we have the following theorem.

\begin{theorem}\label{thm::convergence1}
	The Levenberg - Marquardt method with exact data $y^{\delta}=y$, $\|x_0-x^{\dag}\|<q/c$ converges to a solution of $F(x)=y$ as $k \to \infty$.
\end{theorem}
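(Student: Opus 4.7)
The plan is to follow the classical convergence analysis of Hanke cited in the paper. The starting observation is the normal-equation form of the update: rewriting the closed expression \eqref{eq::LMmethod} as
\[
\alpha_k(x_{k+1}-x_k) \;=\; F'(x_k)^*\,s_k, \qquad s_k := y - F(x_k) - F'(x_k)(x_{k+1}-x_k),
\]
and combining with the Morozov equation \eqref{eq::Morozov} gives $\|s_k\| = q\,\|y-F(x_k)\|$. As a preliminary step one checks that the Morozov choice $\alpha_k > 0$ exists and is unique at each iteration: applying \eqref{eq::tcc} to the pair $(x_k,x^\dagger)$, the orthogonal projection of $y-F(x_k)$ onto $R(F'(x_k))^\perp$ has norm at most $c\|e_k\|\,\|y-F(x_k)\|$, where $e_k := x_k - x^\dagger$. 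As long as $c\|e_k\| < q$, the discrepancy function $\alpha \mapsto \|s_k(\alpha)\|/\|y-F(x_k)\|$ is continuous and monotone on $(0,\infty)$, its limit at $\alpha\to 0^+$ is bounded by $c\|e_k\|<q$, and its limit at $\alpha\to\infty$ is $1$, so it attains the value $q$ at exactly one $\alpha_k>0$.

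The heart of the argument is a monotonicity identity for $\|e_k\|^2$. Expanding $\|e_{k+1}\|^2-\|e_k\|^2 = \|x_{k+1}-x_k\|^2 + 2(x_{k+1}-x_k,e_k)$, inserting the normal equation to compute $(x_{k+1}-x_k,e_k) = \alpha_k^{-1}(s_k, F'(x_k)e_k)$, then using \eqref{eq::tcc} to write $F'(x_k)e_k = -(y-F(x_k)) - r_k$ with $\|r_k\| \le c\|e_k\|\,\|y-F(x_k)\|$, and finally exploiting $\|s_k\| = q\|y-F(x_k)\|$, one arrives at
\[
\|e_{k+1}\|^2 \;\le\; \|e_k\|^2 - \|x_{k+1}-x_k\|^2 - \frac{2q\,(q-c\|e_k\|)}{\alpha_k}\,\|y-F(x_k)\|^2.
\]
An induction starting from the hypothesis $\|e_0\| < q/c$ gives $\|e_k\| < q/c$ and strict monotone decay of $\|e_k\|$ for every $k$, so the factor $q - c\|e_k\|$ is bounded below by the positive constant $q - c\|e_0\|$. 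Telescoping produces
\[
\sum_{k=0}^\infty \|x_{k+1}-x_k\|^2 < \infty, \qquad \sum_{k=0}^\infty \frac{\|y-F(x_k)\|^2}{\alpha_k} < \infty.
\]
Moreover the resolvent identity $s_k = \alpha_k(F'(x_k)F'(x_k)^*+\alpha_k I)^{-1}(y-F(x_k))$ together with the assumed uniform bound $\|F'(x)\| \le M$ on $B_R(x_0)$ yields $\alpha_k \le qM^2/(1-q)$, so in particular $\|y-F(x_k)\| \to 0$.

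The main obstacle is to upgrade the monotone decrease of $\|e_k\|$ to norm convergence of $(x_k)$, i.e. the Cauchy property. Here I would use the standard intermediate-index trick: given $k<l$, choose $m \in [k,l]$ with $\|y-F(x_m)\| = \min_{k\le j\le l}\|y-F(x_j)\|$, write $x_l - x_k = (x_l - x_m) - (x_k - x_m)$, and bound each $\|x_j - x_m\|^2$ for $j\in\{k,l\}$ by a telescoping sum whose typical step is paired with $x_i - x_m$ through the normal equation. The tangential cone condition applied to $(x_i, x_m)$ replaces $F'(x_i)(x_i - x_m)$ by $F(x_i) - F(x_m)$ up to a remainder of size $c\|x_i - x_m\|\,\|F(x_i) - F(x_m)\|$, and by the minimality of $m$ every norm $\|y - F(x_j)\|$ that subsequently appears is dominated by $\|y - F(x_k)\|$. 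Rearranging and exploiting the two summability statements, the resulting bound on $\|x_l-x_k\|^2$ is controlled by the tails of those series, hence tends to zero as $k\to\infty$. Thus $(x_k)$ converges to some $x_\ast \in \overline{B_R(x_0)} \subset D(F)$, and because $F$ is continuous and $\|y-F(x_k)\|\to 0$, we conclude $F(x_\ast)=y$, proving the theorem.
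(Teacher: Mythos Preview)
The paper does not give its own proof of Theorem~\ref{thm::convergence1}; the result is quoted from \cite{hanke,KNO} as background for Section~\ref{sec::LMM}, and the paper's contribution lies instead in verifying the hypotheses (Fr\'echet differentiability and the tangential cone condition) for the MRE forward map in Section~3. Your write-up reproduces precisely the classical argument of Hanke: the normal-equation/Morozov identity, the error-monotonicity inequality
\[
\|e_{k+1}\|^2 \le \|e_k\|^2 - \|x_{k+1}-x_k\|^2 - \frac{2q(q-c\|e_k\|)}{\alpha_k}\|y-F(x_k)\|^2,
\]
the bound $\alpha_k \le qM^2/(1-q)$ from the resolvent representation of $s_k$, and the intermediate-index Cauchy argument. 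So the approach is correct and is exactly the one underlying the cited references.

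One small point worth tightening: you invoke the tangential cone condition and the uniform bound on $F'$ on $B_R(x_0)$, so you must check that each iterate $x_k$ actually lies there. Your monotonicity gives $\|x_k-x^\dagger\|\le\|x_0-x^\dagger\|$, hence $\|x_k-x_0\|\le 2\|x_0-x^\dagger\|<2q/c$; this stays inside $B_R(x_0)$ provided $2q/c\le R$, which is implicit in the standing assumptions but should be stated. With that caveat, the proof is complete and faithful to the source the paper relies on.
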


This theorem means that a true solution $x^{\dag}\in B_R(x_0)$ of the equation \eqref{eq::nonlinear op} with exact data $y$ can be recovered by the Levenberg - Marquardt method. For the noisy data $y^{\delta}$, we have to set up some stopping rule to terminate the iteration appropriately, i.e. as soon as the step index $k=k_*$ satisfies the following discrepancy principle
\begin{equation}\label{eq::stopping rule}
\|y^{\delta}-F(x_{k_*})\|\leq \tau\delta<\|y^{\delta}-F(x_k)\|,\quad 0\leq k<k_*,
\end{equation}
with a constant $\tau>1/q$, stop the iteration. The following theorem gives a convergence of the Levenberg - Marquardt method for noisy data.

\begin{theorem}\label{thm::convergence2}
	Let $k_*=k_*(\delta,y^{\delta})$ be chosen according to the stopping rule \eqref{eq::stopping rule} with $\tau>1/q$. Then starting from the initial guess $x_0$ which satisfies $\|x_0-x^{\dag}\|\leq (q\tau-1)/(c(1+\tau))$, the discrepancy principle \eqref{eq::stopping rule} terminates the Levenberg - Marquardt method with $\alpha_k$ determined from \eqref{eq::Morozov} after finitely many iterations $k_*$ and
	we have \[k_*(\delta,y^{\delta})=O(1+|\ln \delta|).\]
	Moreover, the sequence $x_k$ $(k=0,1,\cdots)$ of the Levenberg - Marquardt method converges to a solution of the equation $F(x) = y$ as $\delta\to 0$.
\end{theorem}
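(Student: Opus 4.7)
The plan is to adapt Hanke's convergence analysis of the regularizing Levenberg-Marquardt scheme (\cite{hanke,KNO}) to the present hypotheses, and to decompose the proof into three building blocks: a monotonicity estimate for the error $\|x_k - x^\dagger\|$, a geometric decay estimate for the residual $\|y^\delta - F(x_k)\|$ yielding the logarithmic bound on $k_*$, and a stability argument connecting the noisy iterates to the noise-free ones, so that convergence as $\delta\to 0$ can be deduced from Theorem \ref{thm::convergence1}.

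The core technical step is a single algebraic expansion. Writing $e_k := x_k - x^\dagger$ and using \eqref{eq::LMmethod} together with the identity $(F'(x_k)^* F'(x_k)+\alpha_k I)^{-1} F'(x_k)^* = F'(x_k)^*(F'(x_k) F'(x_k)^* + \alpha_k I)^{-1}$, I would expand $\|e_{k+1}\|^2 - \|e_k\|^2$ and regroup so that the nonlinearity defect $F(x^\dagger) - F(x_k) - F'(x_k)(x^\dagger - x_k)$ appears; by the tangential cone condition \eqref{eq::tcc} with $x = x_k$, $\widetilde x = x^\dagger$, this defect is bounded by $c\,\|e_k\|\,\|y - F(x_k)\|$. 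The Morozov identity \eqref{eq::Morozov} then contributes exactly the factor $q\,\|y^\delta - F(x_k)\|$ on the linearized residual side, while $\|y^\delta - y\|\le \delta$ is used to exchange $y^\delta$ and $y = F(x^\dagger)$. The assumptions $\tau > 1/q$ and $\|x_0 - x^\dagger\| \le (q\tau-1)/(c(1+\tau))$ are precisely what is needed to make the resulting upper bound on $\|e_{k+1}\|^2 - \|e_k\|^2$ strictly negative whenever the discrepancy principle $\|y^\delta - F(x_k)\| > \tau\delta$ still fails; induction gives monotonicity of $\|e_k\|$ and the inclusion $x_k \in B_R(x_0)$ for every $k\le k_*$.

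The finiteness of $k_*$ together with the logarithmic rate follow by applying \eqref{eq::Morozov} and \eqref{eq::tcc} to the pair $(x_k, x_{k+1})$: since $y^\delta - F(x_{k+1}) = [y^\delta - F(x_k) - F'(x_k)(x_{k+1}-x_k)] - [F(x_{k+1}) - F(x_k) - F'(x_k)(x_{k+1}-x_k)]$, the triangle inequality gives
\[
\|y^\delta - F(x_{k+1})\| \le q\,\|y^\delta - F(x_k)\| + c\,\|x_{k+1}-x_k\|\,\|F(x_{k+1})-F(x_k)\|,
\]
and the second summand can be absorbed into the first using $\|F(x_{k+1})-F(x_k)\| \le \|y^\delta - F(x_{k+1})\| + \|y^\delta - F(x_k)\|$ and the smallness of $c\|x_{k+1}-x_k\|$ guaranteed by the monotonicity step. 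This produces a contraction $\|y^\delta - F(x_{k+1})\| \le \widetilde q\,\|y^\delta - F(x_k)\|$ with some fixed $\widetilde q < 1$ for every $k < k_*$; iterating and comparing with $\|y^\delta - F(x_{k_*-1})\| > \tau\delta$ then yields $k_* = O(1+|\ln\delta|)$.

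The main obstacle is the passage $\delta\to 0$, because $k_*(\delta,y^\delta)\to\infty$ prevents a direct passage to the limit in a fixed iterate. Following \cite{hanke}, I would compare the noisy iterates $x_k^\delta$ with the noise-free iterates $x_k$: continuity of the Tikhonov-regularized step in the data, combined with a separate argument that $\alpha_k^\delta\to\alpha_k$ as $\delta\to 0$, yields $x_k^\delta\to x_k$ for each fixed $k$, and Theorem \ref{thm::convergence1} provides $x_k\to x^*$ for some solution $x^*$. A diagonal argument matching $k$ with $k_*(\delta)$ -- with the discrepancy principle controlling the tail beyond the chosen $k$ and per-step continuity controlling the head, together with the monotonicity from the first step to propagate the estimate forward to $k_*(\delta)$ -- then produces $x_{k_*}^\delta\to x^*$. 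Verifying the continuity of $\alpha_k^\delta$ in $\delta$ and setting up this diagonal argument cleanly is the technically most delicate piece.
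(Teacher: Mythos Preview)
The paper does not actually prove Theorem~\ref{thm::convergence2}; it is stated in Section~\ref{sec::LMM} as a known result quoted from \cite{hanke,KNO}, and the paper's own contribution (Section~3) is to verify the hypotheses (Fr\'echet differentiability and the tangential cone condition) for the MRE forward map. So there is no ``paper's own proof'' to compare against.

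That said, your outline is a faithful reconstruction of Hanke's argument in \cite{hanke} and is essentially correct at the sketch level. A couple of points to watch if you flesh it out. First, in your Step~2 the contraction constant $\widetilde q$ you obtain is of the form $(q + 2c\|e_0\|)/(1 - 2c\|e_0\|)$ after absorption, and one needs to check that the hypothesis $c\|e_0\|\le (q\tau-1)/(c(1+\tau))$ actually forces this to be $<1$; in Hanke's paper the geometric decay of the nonlinear residual is not derived quite this way but rather comes out of the monotonicity identity itself, which yields a telescoping sum of residuals bounded by $\|e_0\|^2$ and then a separate contraction argument. Second, your Step~3 correctly identifies the delicate point: one must show that for each fixed $k$ the map $(\delta,y^\delta)\mapsto \alpha_k^\delta$ (and hence $x_k^\delta$) is continuous, and then run the three-epsilon splitting with the monotonicity of $\|e_k^\delta\|$ to control the tail. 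This is exactly Hanke's Theorem~2.4 argument and your description is accurate, though ``diagonal argument'' slightly undersells the role of monotonicity in handling the segment between the fixed index and $k_*(\delta)$.
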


\section{Application to Inverse Problem of MRE}
Let $u^{\delta}:=u^{obs}\in H^1(\Omega)$ be the MRE measured datum which may have a noise with a noise level $\delta$. Also, let $\mathcal{P}$ be the set defined by
$$
\begin{array}{rl}
\mathcal{P}:=&\{\gamma=G'+iG''|(G',\,G'')\in [L^\infty(\Omega)]^2\}\\
&\cap\{\gamma=G'+iG''|0<\widetilde{G'}\leq G'\leq \widehat{G'},\,0<\widetilde{G''}\leq G''\leq \widehat{G''}\}
\end{array}$$
with some positive constants $\widetilde{G'},\,\,\widehat{G'}$. We consider the operator equation
\begin{equation}\label{eq::operator equation}
F(\gamma)=u,
\end{equation}
where $F$ is defined by
\[\label{eq::operator}
F:\, D(F):=\mathcal{P} \subset L^{\infty}(\Omega) \subset \mathcal{X}:=L^2(\Omega) \to \mathcal{Y}:=H^1(\Omega),
\]
and $u$ is the solution to \eqref{eq::fp} with $\gamma=G'+i G''\in \mathcal{P}$.

Under this setting we will use the Levenberg-Marquardt method given in Section \ref{sec::LMM} which considered the following iteration procedure:
\[\label{eq::newton}
\gamma_{k+1}=\gamma_k+\big(F'(\gamma_k) ^*F'(\gamma_k) +\alpha_k I \big)^{-1}F'(\gamma_k)^*
\big(u^{\delta}-F(\gamma_k)\big)\quad (k\in \mathbb{N}\cup \{0\}).
\]

From what we gave in Section \ref{sec::LMM}, we can have the convegence of this iteration scheme, if we show the Fr\'{e}chet derivative $F'(\gamma)$ at $\gamma=\gamma_k\in \mathcal{P}$ exists and is uniformly bounded in $B_R(\gamma_0)$, and the tangential cone condition \eqref{eq::tcc} holds in  $B_{2R}(\gamma_0)\subset \mathcal{P}$ for some $R>0$ and an initial guess $\gamma_0$.

\subsection{Fr\'{e}chet Derivative}
In this subsection we will compute the Fr\'{e}ched derivative $F'(\gamma)$ of $F$ at $\gamma$. Let
$$\gamma^{\delta}:=\gamma+\delta \gamma\,\mbox{with}\,\,\delta \gamma\in L^\infty(\Omega),
\Vert \delta \gamma\Vert_{L^\infty(\Omega)}<<1$$
be the perturbation of $\gamma$ and $F(\gamma^{\delta}):=u^{\delta}$ be
the corresponding output, where $u^{\delta}$ is the solution to
\begin{equation}\label{eq::pfp}
\left \{
\begin{array}{ll}
\nabla\cdot\big [\gamma^{\delta}\nabla u^{\delta}(x)\big ]+\rho\omega^2u^{\delta}(x)=0 &\mbox{in } \Omega\\
u^{\delta}(x)=g(x)  &\mbox{on } \partial \Omega
\end{array}
\right.
\end{equation}
with $\gamma^{\delta}=\gamma+\delta \gamma$. Note that $u^\delta$ here is not the noisy data of $u$ corresponding to $\gamma$, it is the exact solution corresponding to $\gamma^\delta=\gamma+\delta\gamma$.

\medskip
Then the Fr\'echet derivative $F'(\gamma): \mathcal{P}\subset L^\infty(\Omega)\rightarrow H^1(\Omega)$ is given as follows.

\begin{lemma}\label{lem::frechet}
	$u'=F'(\gamma)\delta \gamma\in H^1(\Omega)$ is a solution to
	\begin{equation}\label{eq::pbvp2}
	\left \{
	\begin{array}{ll}
	\nabla\cdot\big [\gamma\nabla u'\big ]+\rho\omega^2 u'=-\nabla\cdot\big [\delta \gamma\nabla u\big ]&\mbox{\rm in } \Omega\\
	u'=0  &\mbox{\rm on } \partial \Omega.
	\end{array}
	\right.
	\end{equation}
\end{lemma}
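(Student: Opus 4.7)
The plan is to verify the Fréchet derivative by the standard device of comparing the true difference $w := u^{\delta}-u$ with the candidate linearization $u'$ solving \eqref{eq::pbvp2}, and showing that the remainder $r := w - u'$ is controlled by $\Vert\delta\gamma\Vert_{L^\infty(\Omega)}^2$.

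First, I would subtract \eqref{eq::fp} from \eqref{eq::pfp} to see that $w \in H^1_0(\Omega)$ satisfies
\begin{equation*}
\nabla\cdot\bigl[\gamma\nabla w\bigr] + \rho\omega^2 w = -\nabla\cdot\bigl[\delta\gamma\,\nabla u^{\delta}\bigr]\quad\text{in }\Omega,\qquad w=0\text{ on }\partial\Omega.
\end{equation*}
By the \emph{positivity} quoted after \eqref{eq::fp} and the Lax--Milgram theorem applied to the sesquilinear form associated with $\gamma$ (exactly as invoked to obtain unique solvability of \eqref{eq::fp}), one has the a priori estimate $\Vert w\Vert_{H^1(\Omega)}\le C\Vert\delta\gamma\,\nabla u^{\delta}\Vert_{L^2(\Omega)}\le C\Vert\delta\gamma\Vert_{L^\infty(\Omega)}\Vert u^{\delta}\Vert_{H^1(\Omega)}$, the constant $C$ depending only on $\lambda_1,\lambda_2,\rho,\omega,\Omega$. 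A similar bound applied to \eqref{eq::pfp} gives $\Vert u^{\delta}\Vert_{H^1(\Omega)}\le C'$ uniformly for $\Vert\delta\gamma\Vert_{L^\infty(\Omega)}$ small, so that $\Vert w\Vert_{H^1(\Omega)}=O(\Vert\delta\gamma\Vert_{L^\infty(\Omega)})$.

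Next I would define $u'\in H^1_0(\Omega)$ as the (unique, again by positivity) solution of \eqref{eq::pbvp2}. Subtracting the equation for $u'$ from the one above for $w$, the remainder $r := w - u'$ solves
\begin{equation*}
\nabla\cdot\bigl[\gamma\nabla r\bigr] + \rho\omega^2 r = -\nabla\cdot\bigl[\delta\gamma\,\nabla(u^{\delta}-u)\bigr] = -\nabla\cdot\bigl[\delta\gamma\,\nabla w\bigr]\quad\text{in }\Omega,\qquad r=0\text{ on }\partial\Omega.
\end{equation*}
Applying once more the positivity estimate to this problem yields
\begin{equation*}
\Vert r\Vert_{H^1(\Omega)}\le C\Vert\delta\gamma\Vert_{L^\infty(\Omega)}\Vert w\Vert_{H^1(\Omega)}\le C''\Vert\delta\gamma\Vert_{L^\infty(\Omega)}^{2}.
\end{equation*}
Hence $\Vert u^{\delta}-u-u'\Vert_{H^1(\Omega)}=o(\Vert\delta\gamma\Vert_{L^\infty(\Omega)})$ as $\Vert\delta\gamma\Vert_{L^\infty(\Omega)}\to 0$, which is exactly the Fréchet-differentiability statement; together with the linearity and boundedness of the map $\delta\gamma\mapsto u'$ determined by \eqref{eq::pbvp2}, this identifies $F'(\gamma)\delta\gamma$ with $u'$.

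The main obstacle is not any single step but making the a priori estimates rigorous: one must ensure that the Lax--Milgram/positivity argument alluded to after \eqref{eq::fp} produces an $H^1$-bound with a constant independent of the perturbation $\delta\gamma$ (so that $u^{\delta}$ stays in a fixed ball as $\delta\gamma\to 0$), and that the divergence-form source $-\nabla\cdot[\delta\gamma\,\nabla w]$ can indeed be paired against $H^1_0$ test functions with the bound $\Vert\delta\gamma\Vert_{L^\infty(\Omega)}\Vert w\Vert_{H^1(\Omega)}$. Once these two uniform well-posedness estimates are in hand, the quadratic control of $r$ follows immediately and completes the proof.
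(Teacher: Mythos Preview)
Your argument is correct and follows essentially the same route as the paper: derive the equation for the difference $u^{\delta}-u$, subtract the linearized equation for $u'$, and use the elliptic a priori estimate (the \emph{positivity}) twice to obtain the $O(\Vert\delta\gamma\Vert_{L^\infty}^2)$ bound on the remainder. The only cosmetic difference is that the paper writes the difference equation with leading coefficient $\gamma+\delta\gamma$ and source $-\nabla\cdot[\delta\gamma\,\nabla u]$, whereas you keep the leading coefficient $\gamma$ and push the perturbation into the source via $u^{\delta}$; after subtracting the $u'$ equation, both arrangements yield the identical remainder problem $\nabla\cdot[\gamma\nabla r]+\rho\omega^{2}r=-\nabla\cdot[\delta\gamma\,\nabla(u^{\delta}-u)]$.
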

\begin{proof}
	By comparing \eqref{eq::fp} and \eqref{eq::pfp}, it is easy to find that $\delta u:=u^{\delta}-u\in H^1(\Omega)$ is the solution to following boundary value problem:
	\begin{equation}\label{eq::pbvp}
	\left \{
	\begin{array}{ll}
	\nabla\cdot\big [(\gamma+\delta \gamma)\nabla \delta u\big ]+\rho\omega^2\delta u=-\nabla\cdot\big [\delta \gamma \nabla u\big ]&\mbox{in } \Omega\\
	\delta u=0  &\mbox{on } \partial \Omega
	\end{array}
	\right.
	\end{equation}
	with $\delta\gamma=\gamma^{\delta}-\gamma$.
	By the standard estimate of solutions of the boundary value problem for elliptic equation satisfying the {\it positivity} which is the positivity of $G''$ if it is not zero and that of $G'$ if $G''=0$ and $\omega$ is small relative to $G'$, $\delta u $ satisfies the estimate
	\[
	\|\delta u\|_{H^1(\Omega)}\lesssim \|\delta \gamma\nabla u\|_{L^2(\Omega)}\lesssim \|g\|_{H^{1/2}(\partial \Omega)}\|\delta \gamma\|_{L^\infty(\Omega)}.
	\]
	Hereafter in this paper, the notation ``$\lesssim$'' denotes the inequality ``$ \le$''
	modulo a multiplication by a positive constant which depends only on $\widetilde{G'}$, $\widehat{G'}$, $\widetilde{G''}$, $\widehat{G''}$, $\lambda_1$, $\lambda_2$, $g$, $\rho$, $\omega$ and $\Omega$.
	From \eqref{eq::pfp} and \eqref{eq::pbvp}, $v:=u^{\delta}-u-u'=\delta u-u'\in H^1(\Omega)$ is a solution to
	\[\label{eq::pbvp3}
	\left \{
	\begin{array}{ll}
	\nabla\cdot\big [\gamma\nabla v\big ]+\rho\omega^2 v=-\nabla\cdot\big [\delta \gamma\nabla \delta u\big ]&\mbox{in } \Omega\\
	v=0  &\mbox{on } \partial \Omega.
	\end{array}
	\right.
	\]
	Again by the standard estimate of solutions of  boundary value problem for elliptic equations with {\it positivity}, $v$ satisfies the estimate
	\[\label{eq::esdeltau}
	\|v\|_{H^1(\Omega)}\lesssim \|\delta \gamma\nabla \delta u\|_{L^2(\Omega)}\lesssim \|\delta \gamma\|_{L^\infty(\Omega)}\|\delta u\|_{H^1(\Omega)}\lesssim \|g\|_{H^{1/2}(\partial \Omega)}\|\delta \gamma\|^2_{L^\infty(\Omega)}.
	\]
	Thus we have
	\begin{equation}
	\|u^{\delta}-u-u'\|_{H^1(\Omega)}=O(\|\delta \gamma\|^2_{L^\infty(\Omega)}),
	\end{equation}
	which implies $F'(\gamma)\delta \gamma=u'$.
\end{proof}

Moreover, by the regularity estimate, we have
\[
\|F'(\gamma)\delta \gamma\|_{H^1(\Omega)}\lesssim\|\delta\gamma\|_{L^\infty(\Omega)}\|g\|_{H^{1/2}(\partial \Omega)},
\]
and this implies $F'$ is uniformly bounded near $\gamma$.

\subsection{Tangential Cone Condition}
We can have the tangential cone condition as follows:
\begin{lemma}\label{lem::cone}
	There is an open ball $B_R(\gamma)\subset \mathcal {P}$ about $\gamma$ of radius $R>0$ such that for all $\widetilde{\gamma},\,\widehat{\gamma}\in B_R(\gamma)$,
	\begin{equation}\label{eq::tccm}
	\|F(\widetilde{\gamma})-F(\widehat{\gamma})-F'(\widehat{\gamma})(\widetilde{\gamma}-\widehat{\gamma})\|_{H^1(\Omega)}\leq c \|\widetilde{\gamma}-\widehat{\gamma}\|_{L^\infty(\Omega)}\|F(\widetilde{\gamma})-F(\widehat{\gamma})\|_{H^1(\Omega)}.
	\end{equation}
	Here we note that $c$ is proportional to the reciprocal of the lower bound of the positivity
	mentioned in the proof of Lemma \ref{lem::frechet}.
\end{lemma}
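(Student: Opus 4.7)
The plan is to write down the PDE satisfied by the residual $w := F(\widetilde\gamma)-F(\widehat\gamma)-F'(\widehat\gamma)(\widetilde\gamma-\widehat\gamma)$ and bound it via the standard elliptic estimate, exactly as was done in the proof of Lemma~\ref{lem::frechet}. Put $\widetilde u := F(\widetilde\gamma)$, $\widehat u := F(\widehat\gamma)$ and $u' := F'(\widehat\gamma)(\widetilde\gamma-\widehat\gamma)$, so that $\widetilde u$, $\widehat u$ solve \eqref{eq::fp} with coefficients $\widetilde\gamma$, $\widehat\gamma$ respectively, while $u'$ solves \eqref{eq::pbvp2} with $\gamma$ replaced by $\widehat\gamma$, $u$ replaced by $\widehat u$ and $\delta\gamma$ replaced by $\widetilde\gamma-\widehat\gamma$. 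The vanishing boundary data match, so $w\in H^1_0(\Omega)$.

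Next I would derive the equation for $w$. Writing $\nabla\cdot(\widehat\gamma\nabla\widetilde u)=\nabla\cdot(\widetilde\gamma\nabla\widetilde u)-\nabla\cdot((\widetilde\gamma-\widehat\gamma)\nabla\widetilde u)=-\rho\omega^2\widetilde u-\nabla\cdot((\widetilde\gamma-\widehat\gamma)\nabla\widetilde u)$ and subtracting the equation for $\widehat u$, one gets $\nabla\cdot(\widehat\gamma\nabla(\widetilde u-\widehat u))+\rho\omega^2(\widetilde u-\widehat u)=-\nabla\cdot((\widetilde\gamma-\widehat\gamma)\nabla\widetilde u)$. Subtracting the equation for $u'$ then yields
\begin{equation*}
\left\{
\begin{array}{ll}
\nabla\cdot[\widehat\gamma\,\nabla w]+\rho\omega^2 w=-\nabla\cdot\bigl[(\widetilde\gamma-\widehat\gamma)\nabla(\widetilde u-\widehat u)\bigr] & \text{in }\Omega,\\
w=0 & \text{on }\partial\Omega.
\end{array}
\right.
\end{equation*}
The crucial observation is that the right-hand side is the divergence of a product in which one factor is the coefficient difference itself and the other is the gradient of $\widetilde u-\widehat u=F(\widetilde\gamma)-F(\widehat\gamma)$; this is precisely what will produce the two factors appearing in the tangential cone inequality.

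Now I would apply the standard $H^1$ energy estimate for the divergence-form elliptic operator $\nabla\cdot(\widehat\gamma\nabla\cdot)+\rho\omega^2$ with zero Dirichlet data, relying on the \emph{positivity} recalled in the proof of Lemma~\ref{lem::frechet}. This gives
\begin{equation*}
\|w\|_{H^1(\Omega)}\lesssim \bigl\|(\widetilde\gamma-\widehat\gamma)\nabla(\widetilde u-\widehat u)\bigr\|_{L^2(\Omega)}\lesssim \|\widetilde\gamma-\widehat\gamma\|_{L^\infty(\Omega)}\,\|\widetilde u-\widehat u\|_{H^1(\Omega)},
\end{equation*}
which is exactly \eqref{eq::tccm} with $c$ equal to the implicit constant above.

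The subtle point—and the only real obstacle—is the uniformity of the implicit constant as $\widetilde\gamma,\widehat\gamma$ vary. The energy estimate depends on lower bounds for $\operatorname{Im}\widehat\gamma=G''$ (or for $\operatorname{Re}\widehat\gamma=G'$ in the reduced case), and this is where the radius $R$ enters: for $\widehat\gamma\in B_R(\gamma)\subset\mathcal P$ the coefficient stays within the fixed envelope $[\widetilde{G'},\widehat{G'}]+i[\widetilde{G''},\widehat{G''}]$ defining $\mathcal P$, so the positivity constant is bounded below uniformly. Choosing $R$ small enough that this envelope is respected throughout $B_R(\gamma)$ makes $c$ a function of the lower envelope bounds, $\omega$, $\rho$ and $\Omega$ only, and in particular $c$ is inversely proportional to that positivity lower bound, as the statement remarks. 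With this uniform choice of $c$, the inequality \eqref{eq::tccm} holds for every pair $\widetilde\gamma,\widehat\gamma\in B_R(\gamma)$, completing the proof.
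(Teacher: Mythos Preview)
Your proof is correct. In fact it is the same PDE computation the paper carries out for $v=\delta u-u'$ in the proof of Lemma~\ref{lem::frechet} (your equation for $w$ is exactly the paper's equation for $v$, with base point $\widehat\gamma$ in place of $\gamma$), and the estimate $\|w\|_{H^1}\lesssim\|\widetilde\gamma-\widehat\gamma\|_{L^\infty}\|\widetilde u-\widehat u\|_{H^1}$ is precisely the middle inequality of the paper's chain for $\|v\|_{H^1}$.

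Where you differ from the paper is only in the last step. The paper does not stop at that middle inequality; instead it first bounds $\|\delta u\|_{H^1}$ by $\|\gamma\|_{L^\infty}\|u'\|_{H^1}$ (via a separate elliptic estimate), obtains $\|F(\widetilde\gamma)-F(\widehat\gamma)-F'(\widehat\gamma)\delta\gamma\|\le C(\|\gamma\|_{L^\infty}+r_0)\|\delta\gamma\|_{L^\infty}\|F'(\widehat\gamma)\delta\gamma\|$, and then uses a triangle-inequality absorption argument (valid only when $\|\delta\gamma\|_{L^\infty}$ is small, forcing the additional restriction $R<[2C(\|\gamma\|_{L^\infty}+r_0)]^{-1}$) to convert $\|F'(\widehat\gamma)\delta\gamma\|$ back into $2\|F(\widetilde\gamma)-F(\widehat\gamma)\|$. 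Your route avoids this detour entirely: you read off the tangential cone bound directly, so no absorption step is needed and $R$ is constrained only by $B_R(\gamma)\subset\mathcal P$. The paper's approach still works, but yours is shorter and yields a cleaner constant.
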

\begin{proof}
	First of all, by \eqref{eq::pbvp2} and \eqref{eq::pbvp}, we have:
	\[\label{eq::pbvp4}
	\left \{
	\begin{array}{ll}
	\nabla\cdot\big [(\gamma+\delta \gamma)\nabla \delta u\big ]+\rho\omega^2\delta u=\nabla\cdot\big [\gamma\nabla u'\big ]+\rho\omega^2u'&\mbox{in } \Omega\\
	\delta u=0  &\mbox{on } \partial \Omega.
	\end{array}
	\right.
	\]
	This implies the estimate
	\[
	\|u^{\delta}-u\|_{H^1(\Omega)}=\|\delta u\|_{H^1(\Omega)}\lesssim \|\gamma\|_{L^\infty(\Omega)}\|u'\|_{H^1(\Omega)}.
	\]
	By inserting this into \eqref{eq::esdeltau}, we have the estimate
	\begin{equation}\label{eq::udelta}
	\|u^{\delta}-u-u'\|_{H^1(\Omega)}\lesssim \|\gamma\|_{L^\infty(\Omega)} \|\delta \gamma\|_{L^\infty(\Omega)}\|u'\|_{H^1(\Omega)}.
	\end{equation}
	
	Similar to \eqref{eq::udelta}, we find that
	\[	\begin{array}{rl}
	&\|F(\widetilde{\gamma})-F(\widehat{\gamma})-F'(\widehat{\gamma})\delta \gamma\|_{H^1(\Omega)}\\
	\leq& C \|\widehat{\gamma}\|_{L^\infty(\Omega)} \|\delta \gamma\|_{L^\infty(\Omega)}\|F'(\widehat{\gamma})\delta \gamma\|_{H^1(\Omega)}\\
	\leq &C( \|\gamma\|_{L^\infty(\Omega)}+r_0)\|\delta \gamma\|_{L^\infty(\Omega)}\|F'(\widehat{\gamma})\delta \gamma\|_{H^1(\Omega)},
	\end{array}\]
	which holds for all $\widehat{\gamma} \in B_{r_0}(\gamma)$ where $r_0>0$ is such that $B_{r_0}(\gamma)\subset \mathcal{P}$ and $\widetilde{\gamma}=\widehat{\gamma}+\delta \gamma \in \mathcal{P}$. Here $C$ is a positive constant independent of $\widetilde{\gamma}$, $\widehat{\gamma}$ and $\gamma$. If $\|\delta \gamma\|_{L^\infty(\Omega)}<[2C( \|\gamma\|_{L^\infty(\Omega)}+r_0)]^{-1}$ then we have
	\[
	\|F'(\widehat{\gamma})\delta \gamma\|_{H^1(\Omega)}\leq 2\|F(\widetilde{\gamma})-F(\widehat{\gamma})\|_{H^1(\Omega)}
	\]
	and it follows that the estimate
	\[	\begin{array}{rl}
	&\|F(\widetilde{\gamma})-F(\widehat{\gamma})-F'(\widehat{\gamma})(\widetilde{\gamma}-\widehat{\gamma})\|_{H^1(\Omega)}\\
	\leq &2C( \|\gamma\|_{L^\infty(\Omega)}+r_0)\|\widetilde{\gamma}-\widehat{\gamma}\|_{L^\infty(\Omega)}\|F(\widetilde{\gamma})-F(\widehat{\gamma})\|_{H^1(\Omega)}.
	\end{array}\]
	
	Therefore, for any positive $R<\min \left \{r_0,\,[2C( \|\gamma\|_{L^\infty(\Omega)}+r_0)]^{-1}\right \}$, the tangential cone condition \eqref{eq::tccm} always holds true for $c=2C( \|\gamma\|_{L^\infty(\Omega)}+r_0)$.
\end{proof}

Before closing this section, we formulate our main result which we can have by combining Theorem \ref{thm::convergence1} , Theorem \ref{thm::convergence2}  , Lemma \ref{lem::frechet}  and Lemma \ref{lem::cone}.

\begin{theorem}\label{thm::main}
	The Levenberg-Marquad method can be applied to give a reconstruction scheme for our inverse problem formulated as solving the operator equation \eqref{eq::operator equation}
	both for the cases that the data is exact and is inexact with an error.
\end{theorem}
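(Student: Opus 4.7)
The plan is to verify the hypotheses of the two abstract convergence theorems of Section \ref{sec::LMM} in the concrete setup of Section 3 and then quote them directly. First I fix $\mathcal{X}=L^2(\Omega)$, $\mathcal{Y}=H^1(\Omega)$, $D(F)=\mathcal{P}$, and the forward map $F(\gamma)=u$ defined through the boundary value problem \eqref{eq::fp}; the unknown true parameter $\gamma^\dagger\in\mathcal{P}$ satisfies $F(\gamma^\dagger)=u$, and the noisy measurement $u^\delta\in H^1(\Omega)$ satisfies $\|u^\delta-u\|_{H^1(\Omega)}\le\delta$. Lemma \ref{lem::frechet} shows that $F$ is Fr\'echet differentiable on $\mathcal{P}$ with derivative characterized by the sensitivity equation \eqref{eq::pbvp2}, and the regularity estimate stated immediately after its proof shows that $F'$ is uniformly bounded near $\gamma^\dagger$. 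Lemma \ref{lem::cone}, applied with $\gamma=\gamma^\dagger$, furnishes a radius $R>0$ and a constant $c>0$ so that the tangential cone condition \eqref{eq::tccm} holds on $B_R(\gamma^\dagger)\subset\mathcal{P}$.

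With these ingredients in hand, I would pick an initial guess $\gamma_0\in B_R(\gamma^\dagger)$ and split the argument according to whether the data is exact or noisy. For exact data $u^\delta=u$, imposing additionally $\|\gamma_0-\gamma^\dagger\|<q/c$ puts us in the hypotheses of Theorem \ref{thm::convergence1}, which yields convergence of the Levenberg-Marquardt iterates $\gamma_k$ produced by \eqref{eq::newton} with the parameter $\alpha_k$ selected by \eqref{eq::Morozov} to a solution of the operator equation \eqref{eq::operator equation}. For noisy data, fixing $\tau>1/q$, imposing the tighter smallness condition $\|\gamma_0-\gamma^\dagger\|\le(q\tau-1)/(c(1+\tau))$, and terminating at the first index $k_*$ satisfying the discrepancy principle \eqref{eq::stopping rule}, Theorem \ref{thm::convergence2} yields termination after $k_*=O(1+|\ln\delta|)$ iterations together with convergence of $\gamma_{k_*}$ as $\delta\to 0$. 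Combining the two cases gives Theorem \ref{thm::main}.

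The step I expect to require the most care is the norm mismatch between the tangential cone condition \eqref{eq::tccm}, which on the parameter side is phrased in $\|\cdot\|_{L^\infty(\Omega)}$, and the abstract statement \eqref{eq::tcc}, which is formulated in the Hilbert norm of $\mathcal{X}=L^2(\Omega)$ used to define the adjoint $F'(\gamma)^*$ and the Tikhonov penalty $\alpha_k\|h\|^2$. Since $\Omega$ is bounded, $\|\cdot\|_{L^2(\Omega)}\lesssim\|\cdot\|_{L^\infty(\Omega)}$, which lets one absorb the embedding factor into a redefined constant $c$, but then the ball $B_R(\gamma^\dagger)$ appearing in Lemma \ref{lem::cone} must be interpreted as an $L^\infty$-ball and the iterates $\gamma_k$ must be kept inside it in the stronger topology. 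This is the only point in the argument where the specific structure of $\mathcal{P}$ as a subset of $L^\infty(\Omega)$ is genuinely used, and it propagates through the explicit smallness conditions on $\gamma_0$ stated above.
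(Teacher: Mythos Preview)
Your strategy is exactly the paper's: Theorem \ref{thm::main} is presented there as an immediate corollary obtained by combining Lemma \ref{lem::frechet} and Lemma \ref{lem::cone} with the abstract Theorems \ref{thm::convergence1} and \ref{thm::convergence2}, with no further argument supplied.

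You go beyond the paper in isolating the norm mismatch as the only delicate point, and you are right that this is where the difficulty lies. However, the fix you propose runs in the wrong direction: the embedding $\|\cdot\|_{L^2(\Omega)}\lesssim\|\cdot\|_{L^\infty(\Omega)}$ does \emph{not} allow you to replace the factor $\|\widetilde\gamma-\widehat\gamma\|_{L^\infty(\Omega)}$ on the right of \eqref{eq::tccm} by $\|\widetilde\gamma-\widehat\gamma\|_{L^2(\Omega)}$, since that would require the reverse inequality. What does work, and is implicit in your last sentence, is that on the $L^\infty$-ball $B_R(\gamma^\dagger)$ one has $c\,\|\widetilde\gamma-\widehat\gamma\|_{L^\infty(\Omega)}\le 2cR$, so for $R$ small enough \eqref{eq::tccm} yields the constant--$\eta$ form of the tangential cone condition that actually drives the convergence proofs in \cite{hanke,KNO}; the remaining obligation is then precisely the one you name, that the iterates $\gamma_k$ stay in this $L^\infty$-ball, which is not automatic from the Hilbert-space monotonicity argument. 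The paper does not address this point either, so your identification of it is a genuine observation rather than a divergence from the paper's intended proof.
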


\section{Numerical test of Lebenberg-Marquard method for MRE}
In this section we will numerically test the performance of our Levenberg-Marquad method. The set up for this numerical test is based on our MRE experiments done in Hokkaido University \cite{fujisaki}.
\begin{figure}[H]
	\centering
	\includegraphics[width=7cm]{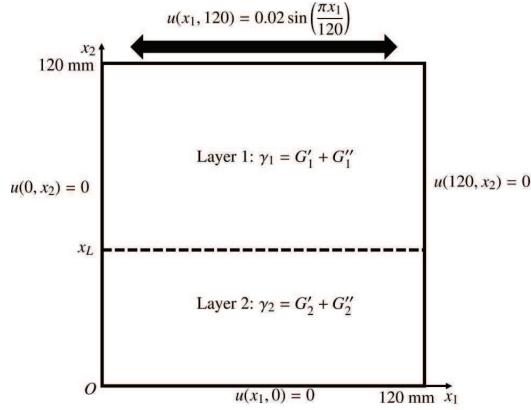}
	\caption{Some setup of parameters for 2D simulations.}\label{fig::fw0}
\end{figure} 
Concerning the spacial resolution of our 0.3 T micro-MRI, which is 1 mm, and the typical size of our two layered agarose gel phantom, which is  $120 \times 120$ grids of a 120 mm $\times$ 120 mm rectangular domain $\Omega$ (see Figure \ref{fig::fw0}),  we consider the following boundary value problem as a special case of \eqref{eq::fp}:
\begin{equation}\label{eq::fpnum}
\left \{
\begin{array}{ll}
(G_1'+iG_1'')\Delta u_1(x_1,x_2)+\rho\omega^2u_1(x_1,x_2)=0\\
\hspace{3cm} 0<x_1<120,\, x_L < x_2<120,\\
(G_2'+iG_2'')\Delta u_2(x_1,x_2)+\rho\omega^2u_2(x_1,x_2)=0\\
\hspace{3cm} 0<x_1<120,\, 0<x_2<x_L,\\
u_1(x_1,x_L)=u_2(x_1,x_L) &0<x_1<120,\\
(G_1'+iG_1'')\partial_{x_2}u_1(x_1,x_L)=(G_2'+iG_2'')\partial_{x_2}u_2(x_1,x_L)&0<x_1<120,\\
u(x_1,120)=u_1(x_1,120)=0.02\sin\left (\dfrac{\pi x_1}{120}\right ) &0<x_1<120,\\
u(x_1,0)=u_2(x_1,0)=0 &0<x_1<120,\\
u(0,x_2)=u_1(0,x_2)=0,\,u(120,x_2)=u_1(120,x_2)=0 &x_L \leq  x_2<120,\\
u(0,x_2)=u_2(0,x_2)=0,\,u(120,x_2)=u_2(120,x_2)=0 &0<x_2<120,
\end{array}
\right.
\end{equation}
with a interface along $x_2=x_L$. Here $(G_1'\,G_1'')$ and $(G_2'\,G_2'')$ are constants.

By applying the method of separation of variables, the solution of \eqref{eq::fpnum} is given as \begin{equation}\label{eq::solution}
u(x_1,x_2)=v(x_2)\times 0.02\sin\left (\dfrac{\pi x_1}{120}\right )
\end{equation}
with
\[
v(x_2)=\left \{
\begin{array}{ll}
c_1e^{i\beta_1 x_2}+c_2 e^{-i\beta_1 x_2} &x_L < x_2<120,\\
d_1e^{i\beta_2 x_2}+d_2 e^{-i\beta_2 x_2}  &0<x_2<x_L,
\end{array}
\right.
\]
where $(c_1,c_2,d_1,d_2)^{\rm{T}}$ is the solution of linear system:
\[
\left [\begin{array}{cccc}
e^{120i\beta_1 } & e^{-120i\beta_1 } & 0 & 0 \\ 
e^{i\beta_1x_L } & e^{-i\beta_1x_L } & -e^{i\beta_2x_L } & -e^{-i\beta_2x_L } \\ 
\beta_1e^{i\beta_1x_L } & -\beta_1e^{-i\beta_1x_L } & -\beta_2e^{i\beta_2x_L } & \beta_2e^{-i\beta_2x_L } \\ 
0 & 0 & 1 & 1
\end{array} \right]
\left [
\begin{array}{c}
c_1 \\ 
c_2 \\ 
d_1 \\ 
d_2
\end{array}\right]=\left [
\begin{array}{c}
1 \\ 
0 \\ 
0\\ 
0
\end{array} \right]
\]
with
\[\beta_j=\sqrt{\dfrac{\rho\omega^2}{G'_j+iG''_j}+\left(\dfrac{\pi}{120}\right)^2}, \quad j=1,2.\]
The other assumptions of parameters used in our numerical simulation are given in Table \ref{table::setup}, 
\begin{table}[H]
	\centering
	\caption{Physical parameters in MRE experiments.}\label{table::setup}
	\centering
	\tabcolsep=8pt
	\begin{tabular}{c||c|c||c|c}
		\hline
		\hline
		Layer 1 ($x_L<x_2<120$) & $G'$ &  20 kPa & $G''$ & 0.4 Pa$\cdot$s$\times \omega$\\
		\hline
		Layer 2  ($0<x_2<x_L$)& $G'$ & 10 kPa & $G''$ & 0.3 Pa$\cdot$s$\times \omega$\\
		\hline
		Others & $\rho$ & 1.0$\times$ 10$^3$ kg/m$^3$ & $x_L$ & 60 mm$\times 2 \pi$\\
		\hline
		\hline
	\end{tabular}
\end{table}

Consider the inverse problem stated in Section \ref{Introduction} for the boundary value problem \eqref{eq::fpnum} under the assumption that we know $x_L$. The numerical test of the performance of our Lebenberg-Marquard method will be given below for this inverse problem. 

\bigskip
\noindent {\bf Example 1: elastic case:}

In this example, we assume $G''= 0$. Recall the uniqueness and stability results of \cite{HMN} given in Section \ref{Introduction}.  It does not exactly fit to the case we have right now. However, due to the fact that we do know the interface and $\Omega$ is just a rectangle, we can easily adapt the argument given in \cite{HMN} to have the uniqueness for our inverse problem in this case.  By using the simulated data generated by \eqref{eq::solution}, we will test our Levenberg-Marquad method in a finite dimensional space obtained by discretizing \eqref{eq::operator equation} for the above setup. We used Matlab$\circledR$ inner-embedded program for the numerical implementation of the method. The progam can adjust automatically the regularizing parameters to ensure the convergence of $\{\gamma_k\}$.

It should be noticed that the iterative sequence may converge to some local minimal point for unsuitable initial iteration guess according to Theorem \ref{thm::convergence1}. Because the constant $c$ in Theorem \ref{thm::convergence1} is highly dependent on $G'$ and $\omega$ when $G''= 0$, we either choose an initial guess of $G'$ which is quite close to the exact value in high frequency case, or can choose an initial guess of $G'$ which is not so close to the exact value in low frequency case. 

\medskip
\noindent {\small\bf Example 1.1: low frequency case:}

In this case, we assume the angular frequency $\omega=20$ Hz and consider the two simulated data. The one is without noise (see Figure \ref{fig::eobs}(a)) and the other is with 20$\%$ relative Gaussian noise (see Figure \ref{fig::eobs}(b) and Figure \ref{fig::eu60} ).

\begin{figure}[H]
	\centering
	(a)\includegraphics[width=.45\textwidth]{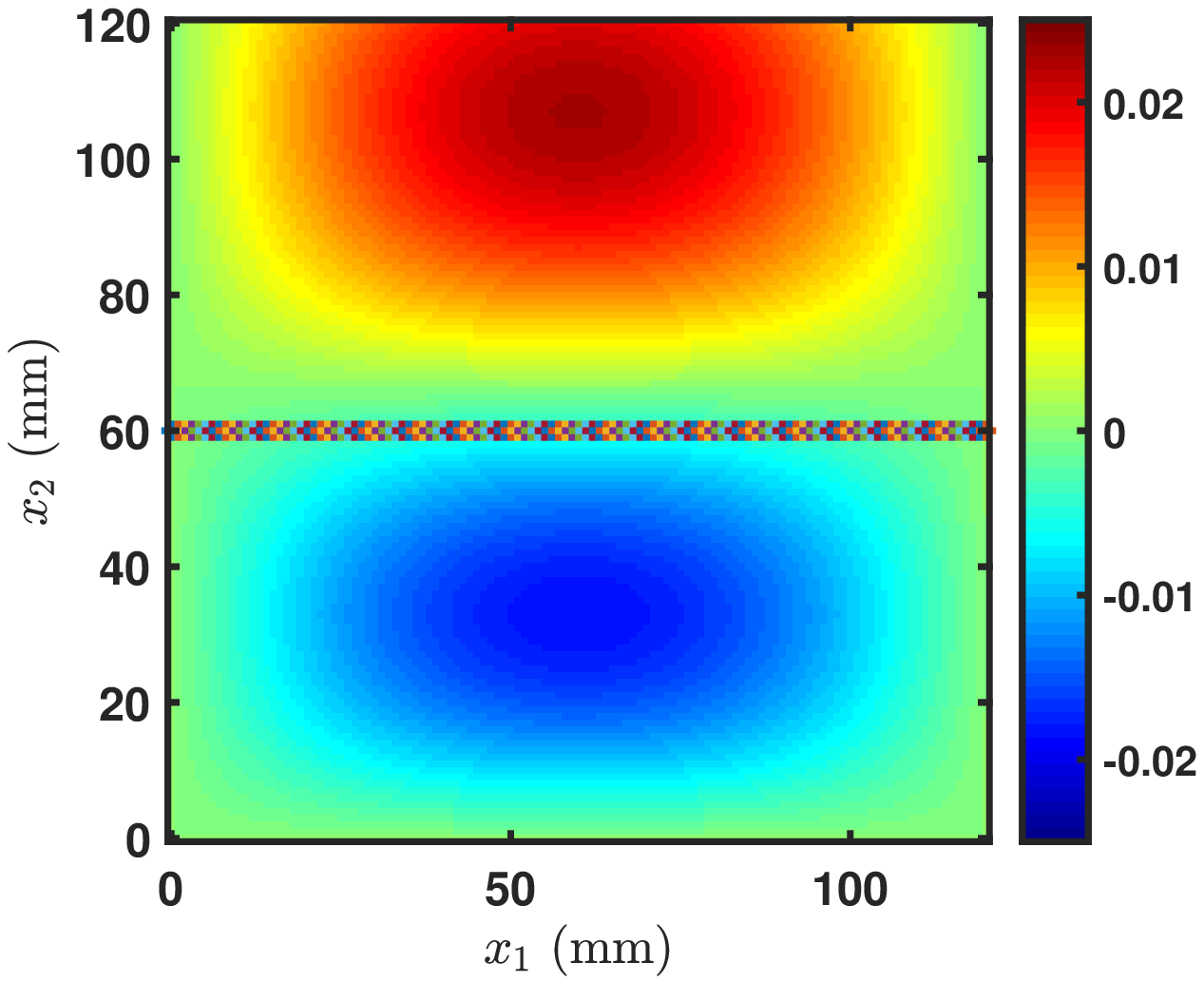}\,
	(b)\includegraphics[width=.45\textwidth]{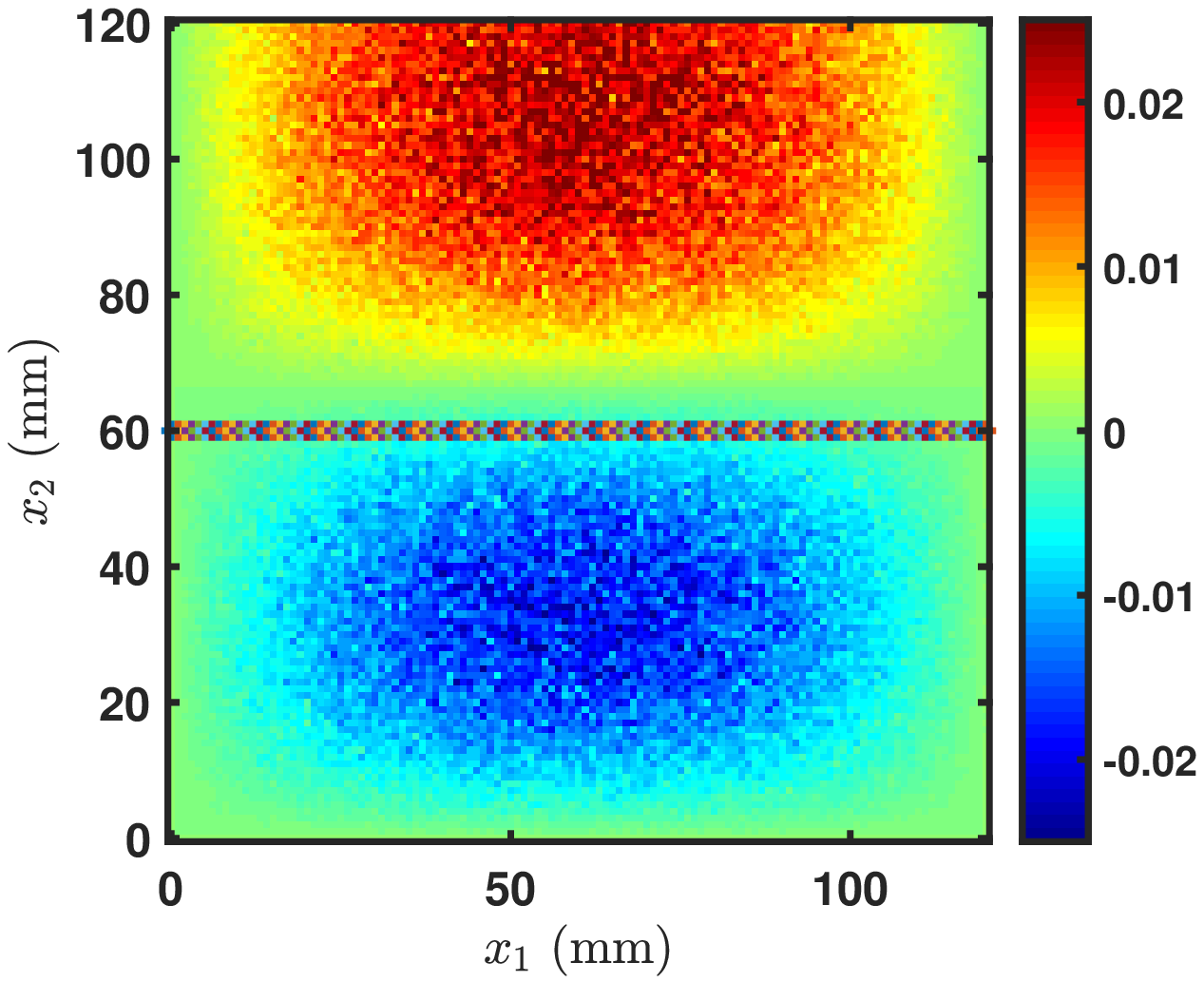}
	\caption{Simulated data: (a) without noise  (mm); (b) with 20$\%$ relative noise (mm).}\label{fig::eobs}
\end{figure}
\begin{figure}[H]
	\centering
	\includegraphics[width=.6\textwidth]{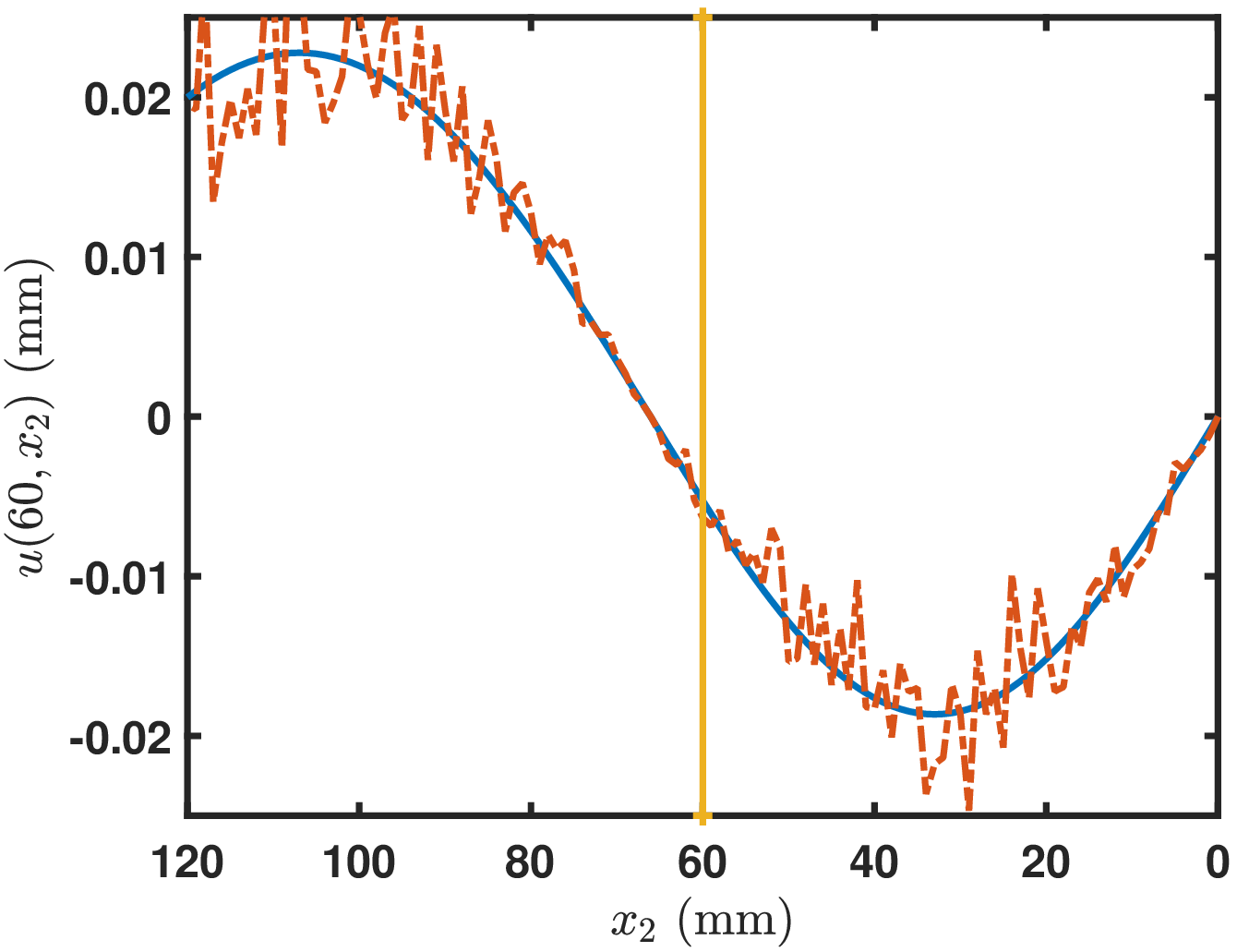}
	\caption{Simulated data along $x_1=60$ mm (blue line: without noise, red line: with 20$\%$ relative noise).}\label{fig::eu60}
\end{figure}

By applying our Levenberg-Marquad method, we recovered $\gamma$ from noisy data shown in Table \ref{table::ereconver} and the reconstructed wave fields are shown in Figure \ref{fig::eobsr}, \ref{fig::eur60}.
\begin{table}[H]
	\caption{Recovery of $G'$.}\label{table::ereconver}
	\centering
	\tabcolsep=10pt
	\begin{tabular}{c||c|c}
		\hline
		\hline	
		Initial guess & $G'$ &  30 kPa \\
		\hline
		\hline
		Layer 1 ($x_L<x_2<120$) & $G'$ &  20.1370 kPa \\
		\hline
		Layer 2  ($0<x_2<x_L$)& $G'$ & 9.9888 kPa \\
		\hline
		\hline
	\end{tabular}
\end{table}
\begin{figure}[H]
	\centering
	\includegraphics[width=.45\textwidth]{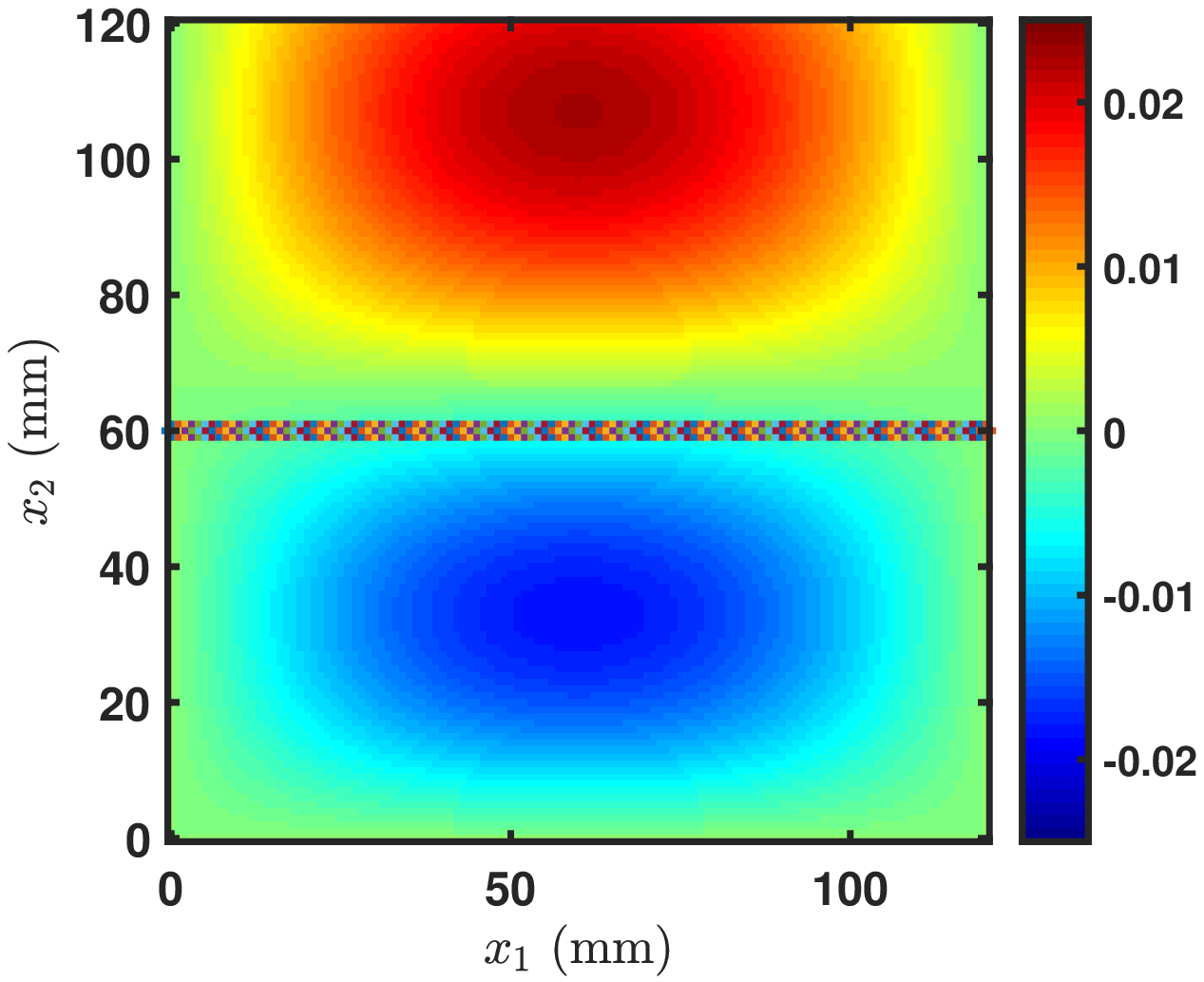}\quad
	\caption{Reconstructed simulated data $u$ (mm).}\label{fig::eobsr}
\end{figure}
\begin{figure}[H]
	\centering
	\includegraphics[width=.6\textwidth]{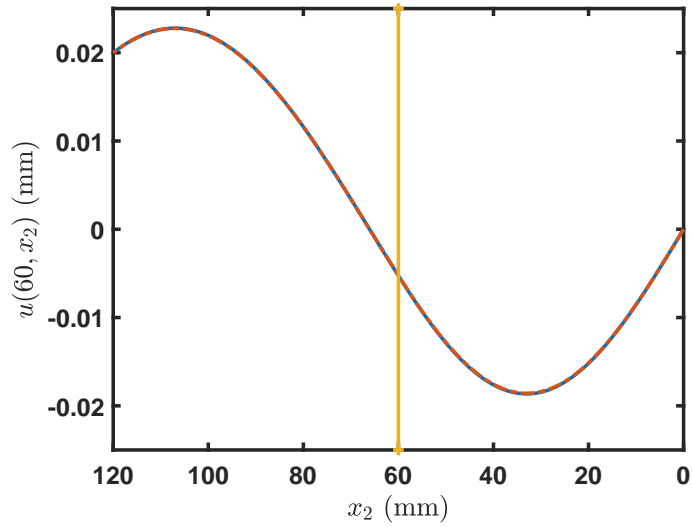}
	\caption{Reconstructed data along $x_1=60$ mm (blue line: without noise, red dot dash line: reconstructed).}\label{fig::eur60}
\end{figure}

\noindent {\small\bf Example 1.2: high frequency case:}
Let the angular frequency $\omega=250$ Hz and consider the two simulated data. The one is without noise (see Figure \ref{fig::heobs}(a)) and the other is with 20$\%$ relative Gaussian noise (see Figure \ref{fig::heobs}(b) and Figure \ref{fig::heu60} ).
\begin{figure}[H]
	\centering
	(a)\includegraphics[width=.45\textwidth]{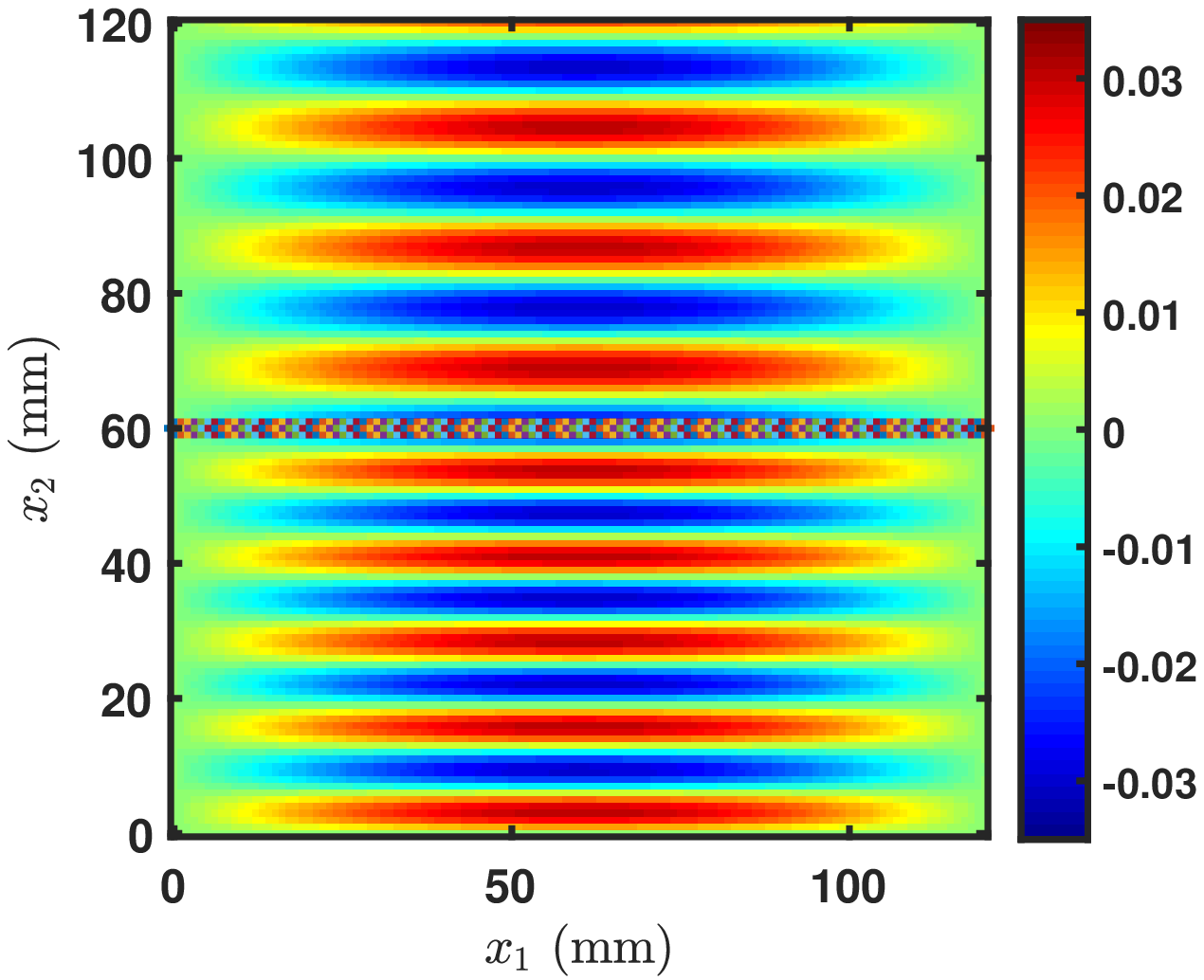}\,
	(b)\includegraphics[width=.45\textwidth]{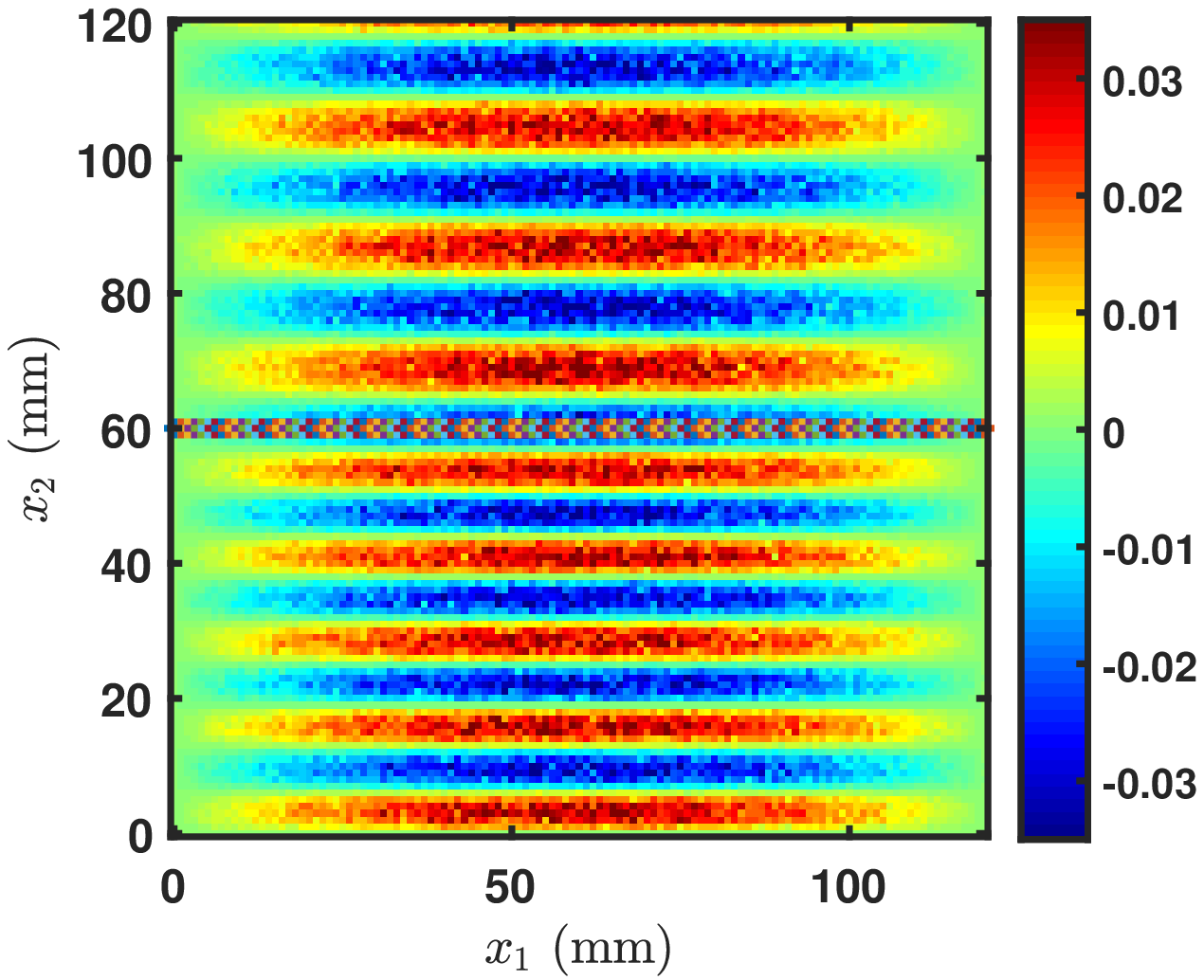}
	\caption{Simulated data: (a) without noise  (mm); (b) with 20$\%$ relative noise (mm).}\label{fig::heobs}
\end{figure}
\begin{figure}[H]
	\centering
	\includegraphics[width=.6\textwidth]{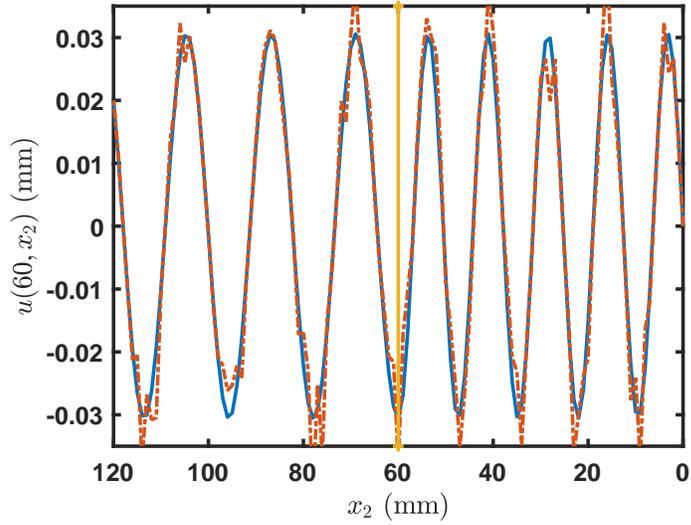}
	\caption{Simulated data along $x_1=60$ mm (blue line: without noise, red line: with 20$\%$ relative noise).}\label{fig::heu60}
\end{figure}

By applying our Levenberg-Marquad method, we recovered  $\gamma$ from the noisy data which is shown in Table \ref{table::hereconver}.  The reconstructed wave fields using the recovered $\gamma$ are shown in Figure \ref{fig::eobsr} and Figure \ref{fig::heur60}. We need to emphasize here that the initial guess of $G'$ must be quite close to the exact value. 
\begin{table}[H]
	\caption{Recovery of $G'$.}\label{table::hereconver}
	\centering
	\tabcolsep=10pt
	\begin{tabular}{c||c|c}
		\hline
		\hline	
		Initial guess & $G'$ &  21 kPa (layer 1) and 9.5 kPa (layer 2) \\
		\hline
		\hline
		Layer 1 ($x_L<x_2<120$) & $G'$ &  19.9987 kPa \\
		\hline
		Layer 2  ($0<x_2<x_L$)& $G'$ & 9.9999 kPa \\
		\hline
		\hline
	\end{tabular}
\end{table}
\begin{figure}[H]
	\centering
	\includegraphics[width=.45\textwidth]{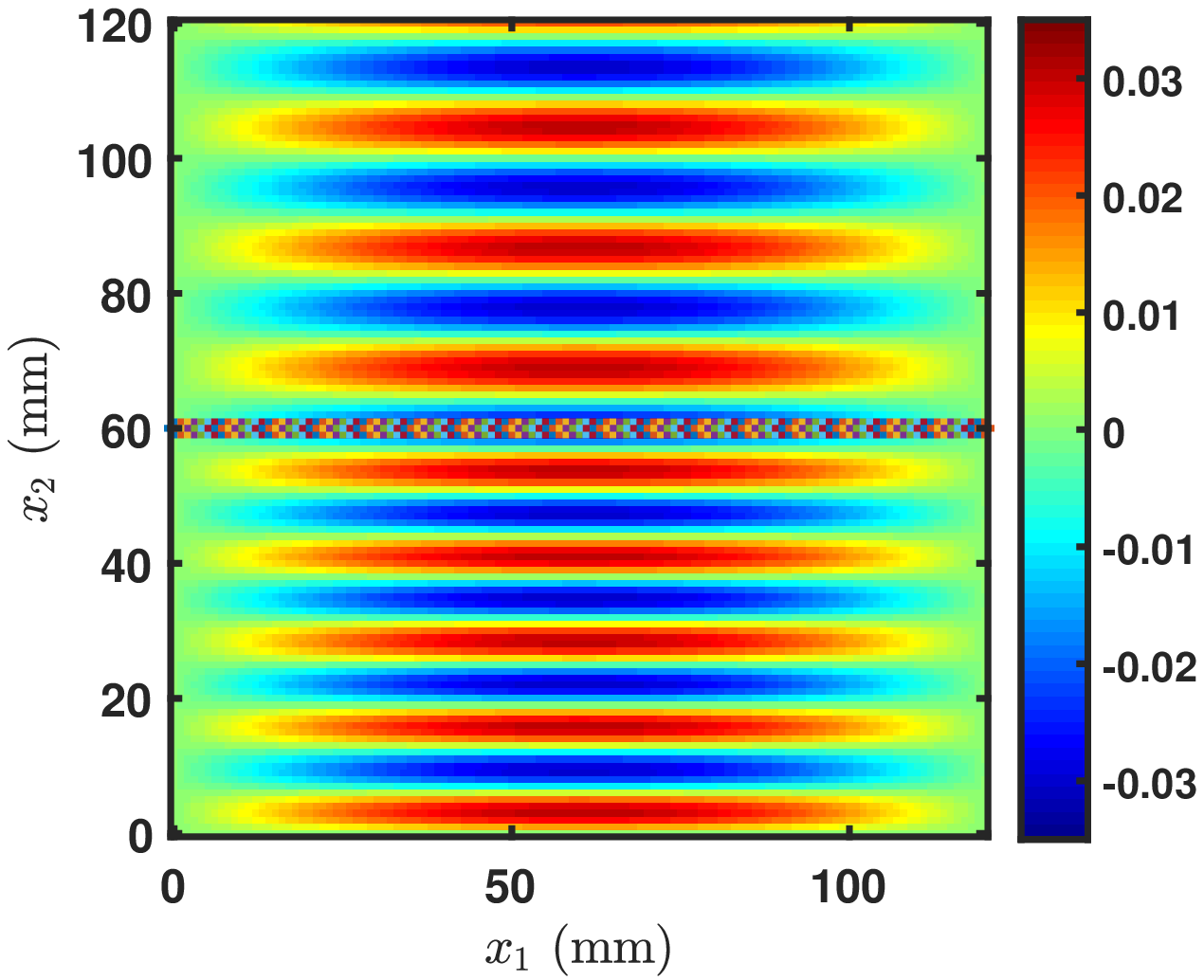}\quad
	\caption{Reconstructed simulated data $u$ (mm).}\label{fig::heobsr}
\end{figure}
\begin{figure}[H]
	\centering
	\includegraphics[width=.6\textwidth]{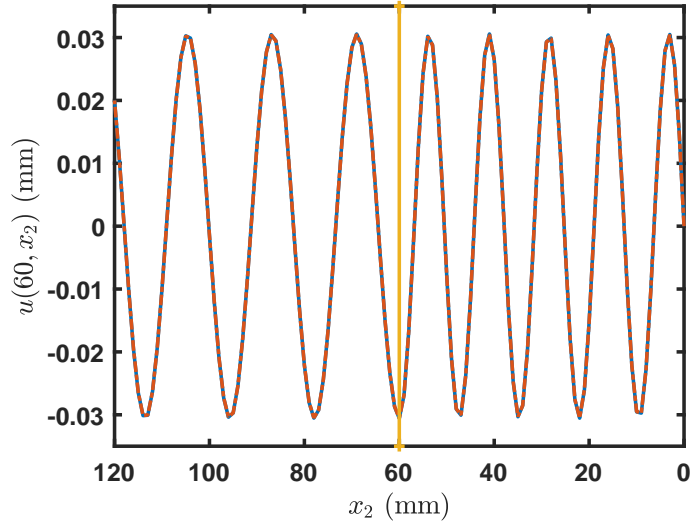}
	\caption{Reconstructed data along $x_1=60$ mm (blue line: without noise, red dot dash line: reconstructed).}\label{fig::heur60}
\end{figure}

\noindent {\bf Example 2: viscoelastic case:}

In this example, we assume $G''\neq 0$ as shown in Table \ref{table::setup}. Due to the positivity of $G''$, we can choose an initial guess of $\gamma$ which is not so close to the exact value for any frequency case according to Theorem \ref{thm::convergence1} and Lemma \ref{lem::cone}. 

\noindent {\small\bf Example 2.1: low frequency case:}

Let the angular frequency $\omega=20$ Hz. Use the simulated data without noise (see Figure \ref{fig::lobs}) and the noisy simulated data with 20$\%$ relative Gaussian noise (see Figure \ref{fig::lobs20} and Figure \ref{fig::lu60} ). 
\begin{figure}[H]
	\centering
	(a)\includegraphics[width=.45\textwidth]{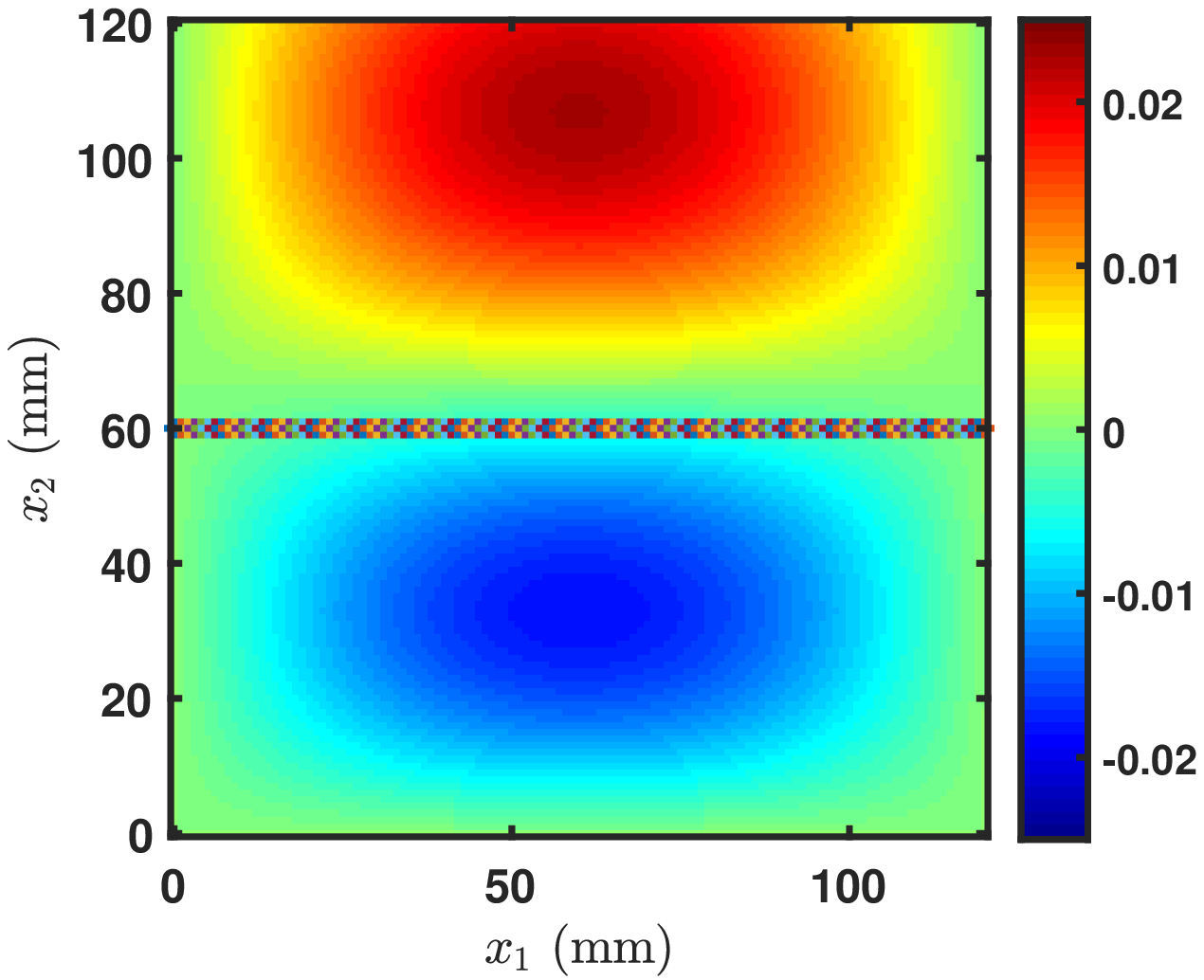}\,
	(b)\includegraphics[width=.45\textwidth]{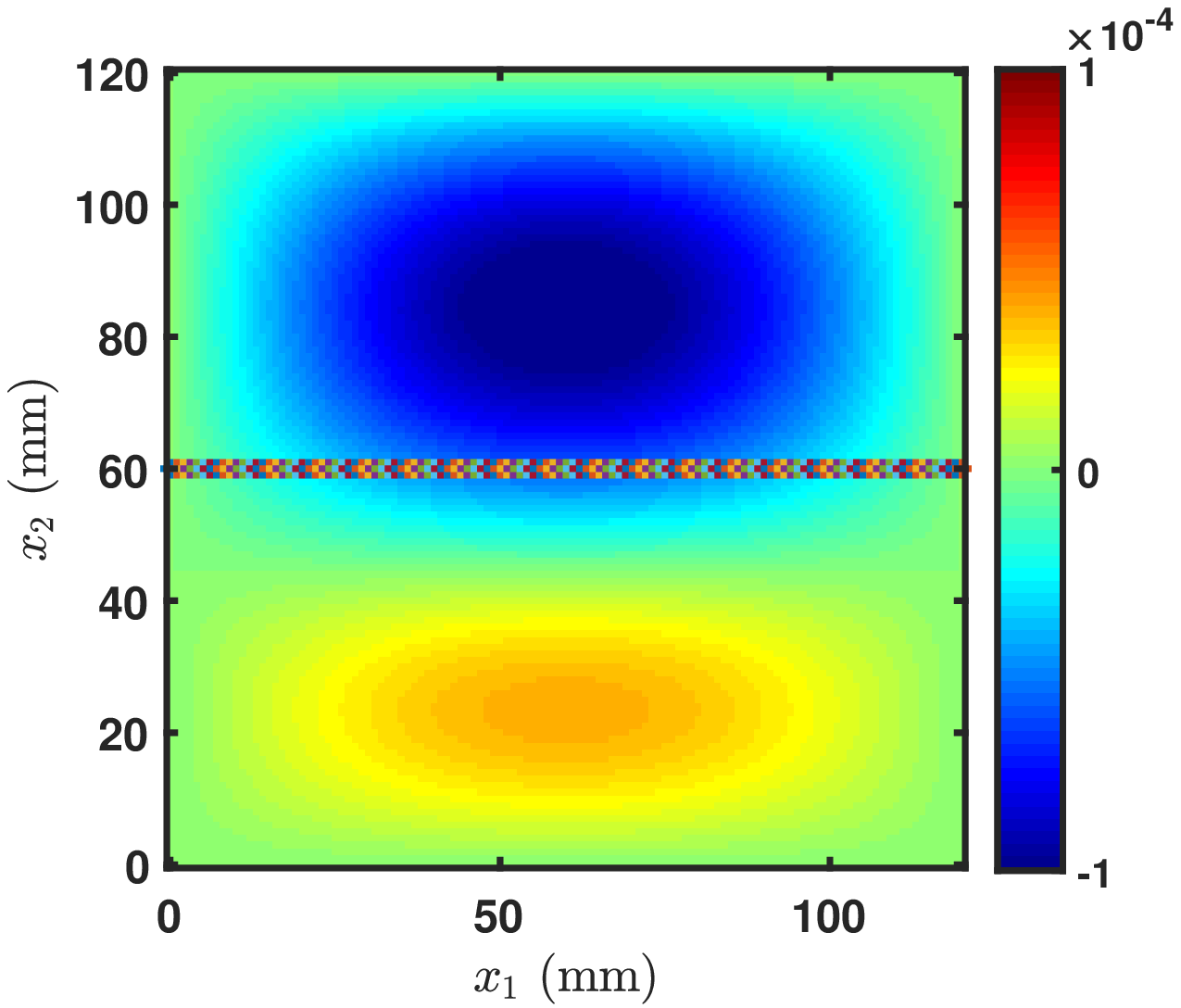}
	\caption{Simulated data without noise: (a) real part of  $u$ (mm); (b) imaginary part of  $u$ (mm).}\label{fig::lobs}
\end{figure}
\begin{figure}[H]
	\centering
	(a)\includegraphics[width=.45\textwidth]{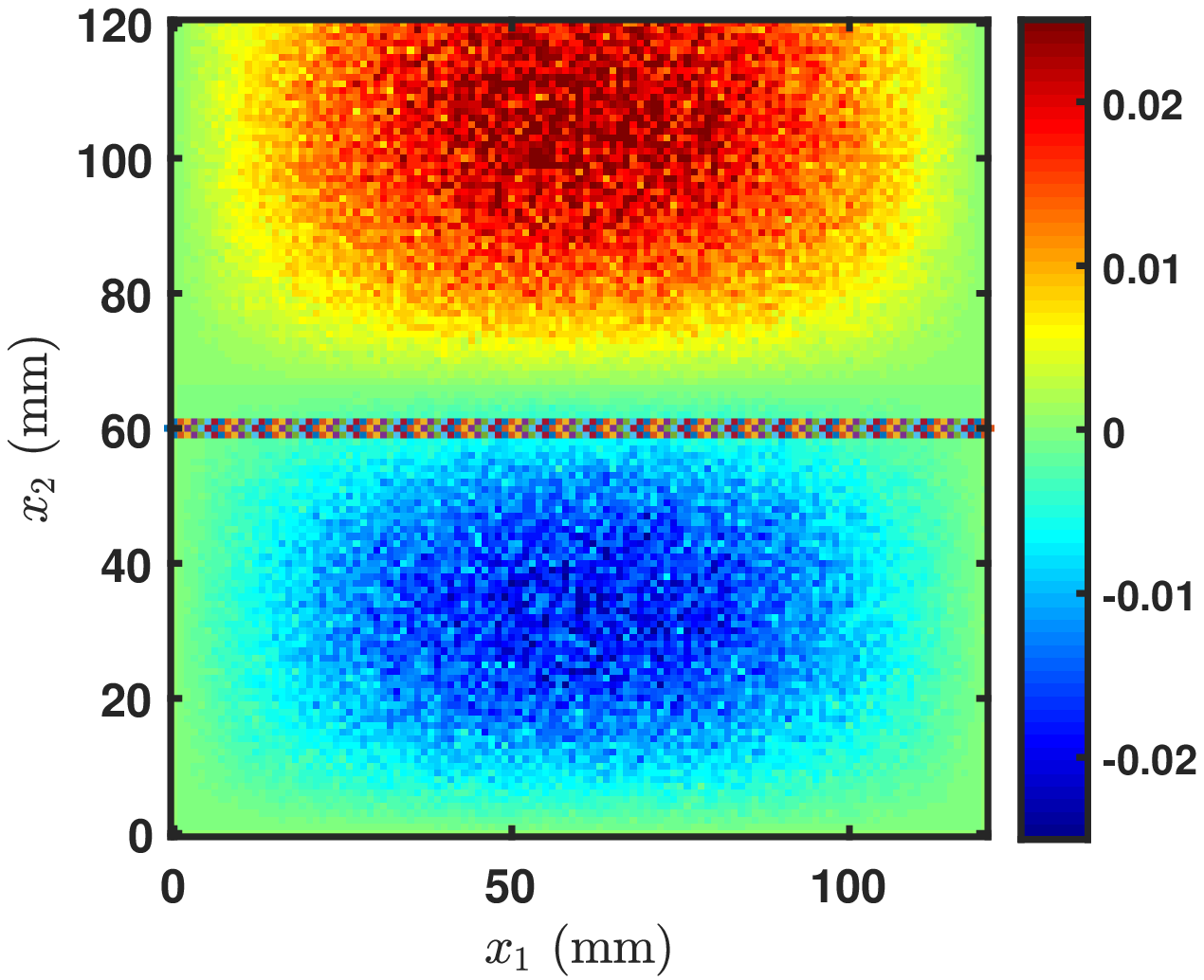}\,
	(b)\includegraphics[width=.45\textwidth]{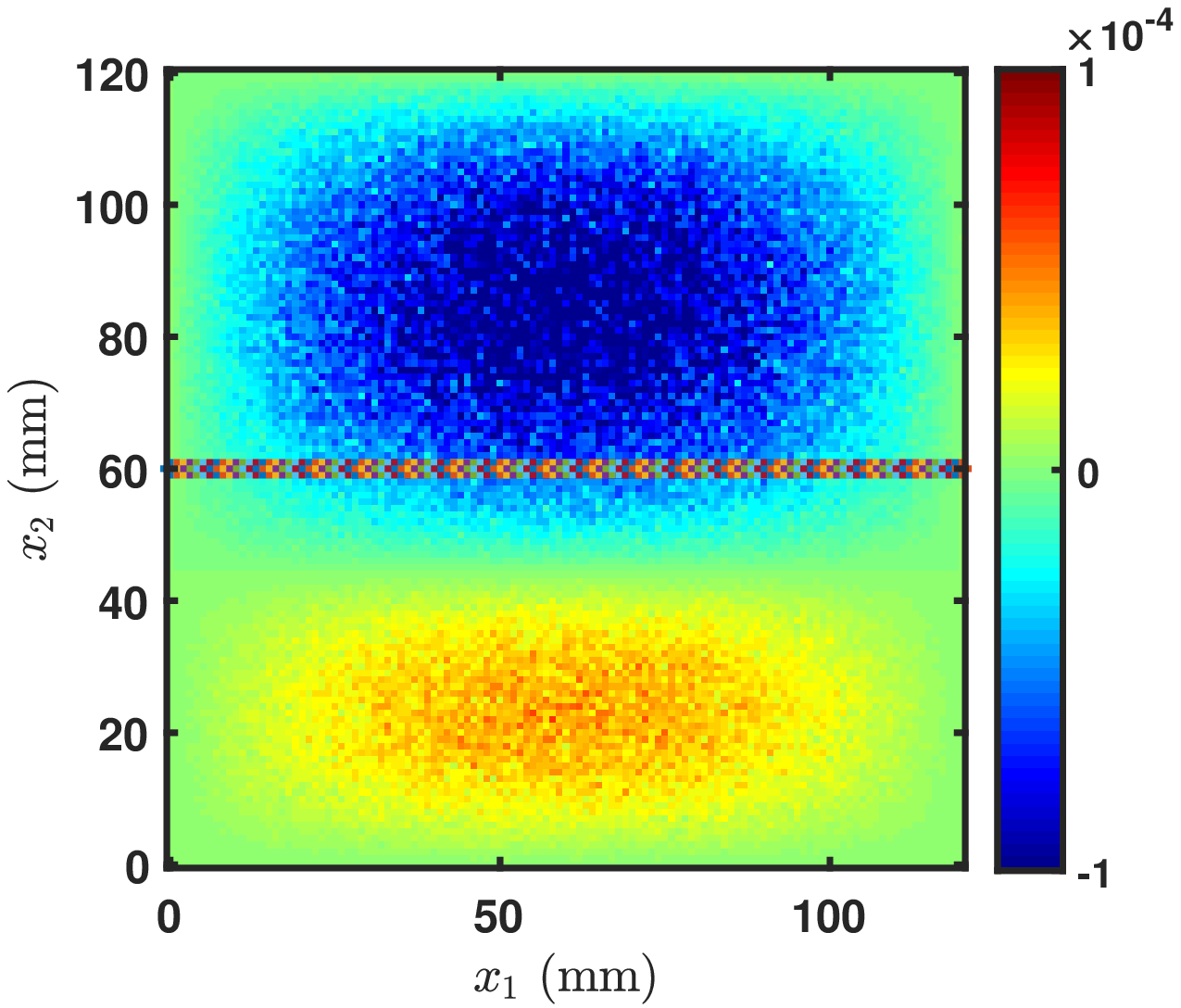}
	\caption{Noisy simulated data: (a) real part of  $u$ (mm); (b) imaginary part of  $u$ (mm).}\label{fig::lobs20}
\end{figure}
\begin{figure}[H]
	\centering
	\includegraphics[width=.6\textwidth]{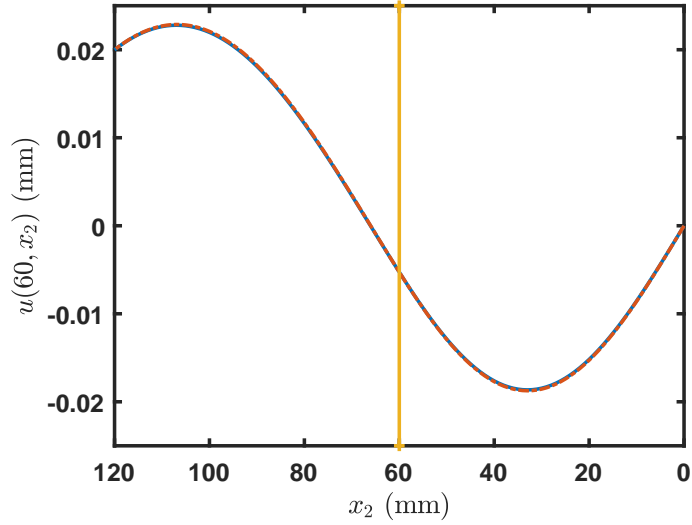}
	\caption{Simulated data along $x_1=60$ mm (blue line: without noise, red line: with 20$\%$ relative noise).}\label{fig::lu60}
\end{figure}

By applying our Levenberg-Marquad method, we recovered $\gamma$ from noisy data which is shown in Table \ref{table::lreconver} and the associated reconstructed wave fields are shown in Figure \ref{fig::lobsr} and Figure \ref{fig::lur60}. The result shows that both the storage modulus $G'$ and loss modulus $G''$ are recovered very well.
\begin{table}[H]
	\caption{Recovery of $\gamma$.}\label{table::lreconver}
	\centering
	\tabcolsep=8pt
	\begin{tabular}{c||c|c||c|c}
		\hline
		\hline	
		Initial guess & $G'$ &  30 kPa & $G''$ & 0.5 Pa$\cdot$s$\times \omega$\\
		\hline
		\hline
		Layer 1 ($x_L<x_2<120$) & $G'$ &  19.8202 kPa & $G''$ & 0.3849 Pa$\cdot$s$\times \omega$\\
		\hline
		Layer 2  ($0<x_2<x_L$)& $G'$ & 9.9829 kPa & $G''$ & 0.2990 Pa$\cdot$s$\times \omega$\\
		\hline
		\hline
	\end{tabular}
\end{table}
\begin{figure}[H]
	\centering
	(a)\includegraphics[width=.45\textwidth]{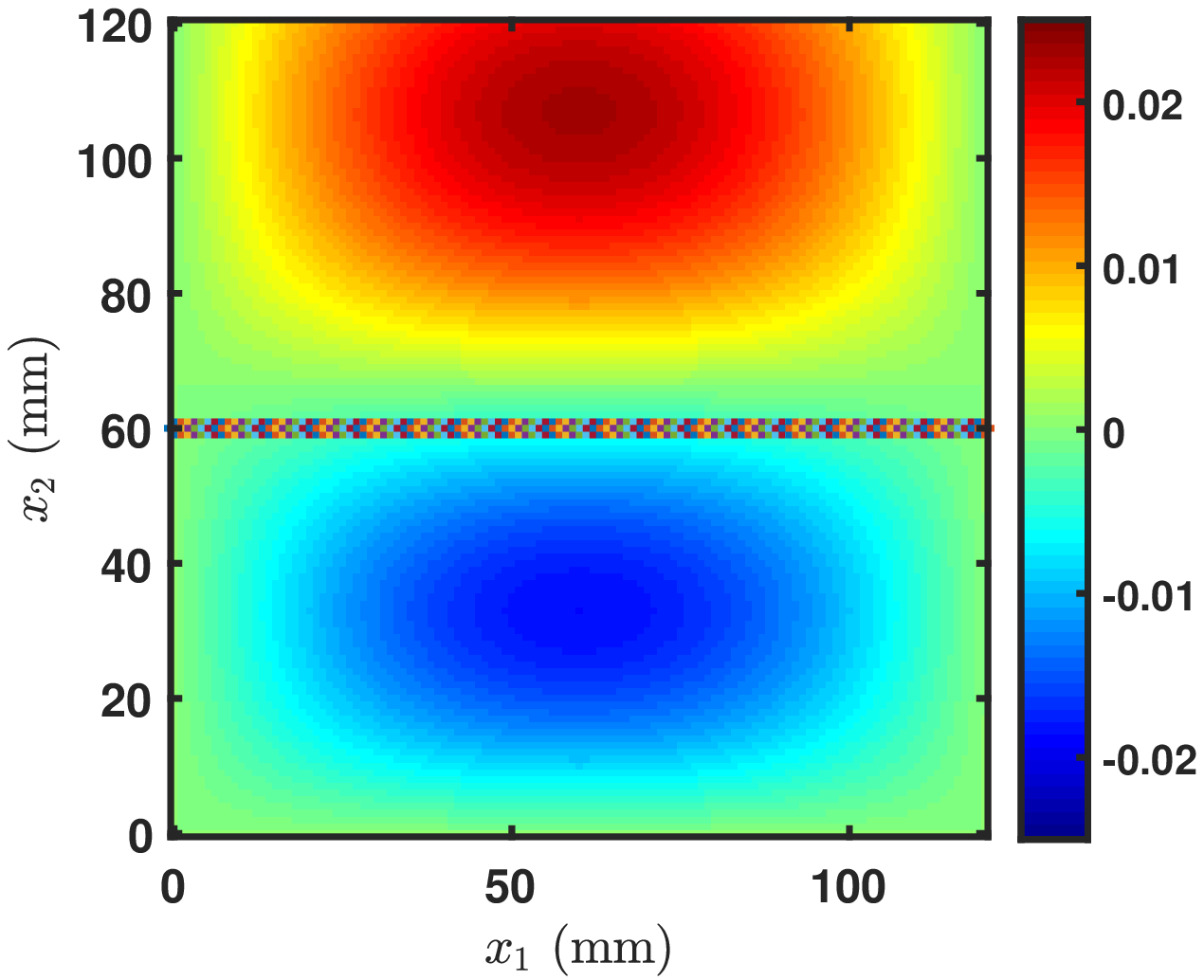}\,
	(b)\includegraphics[width=.45\textwidth]{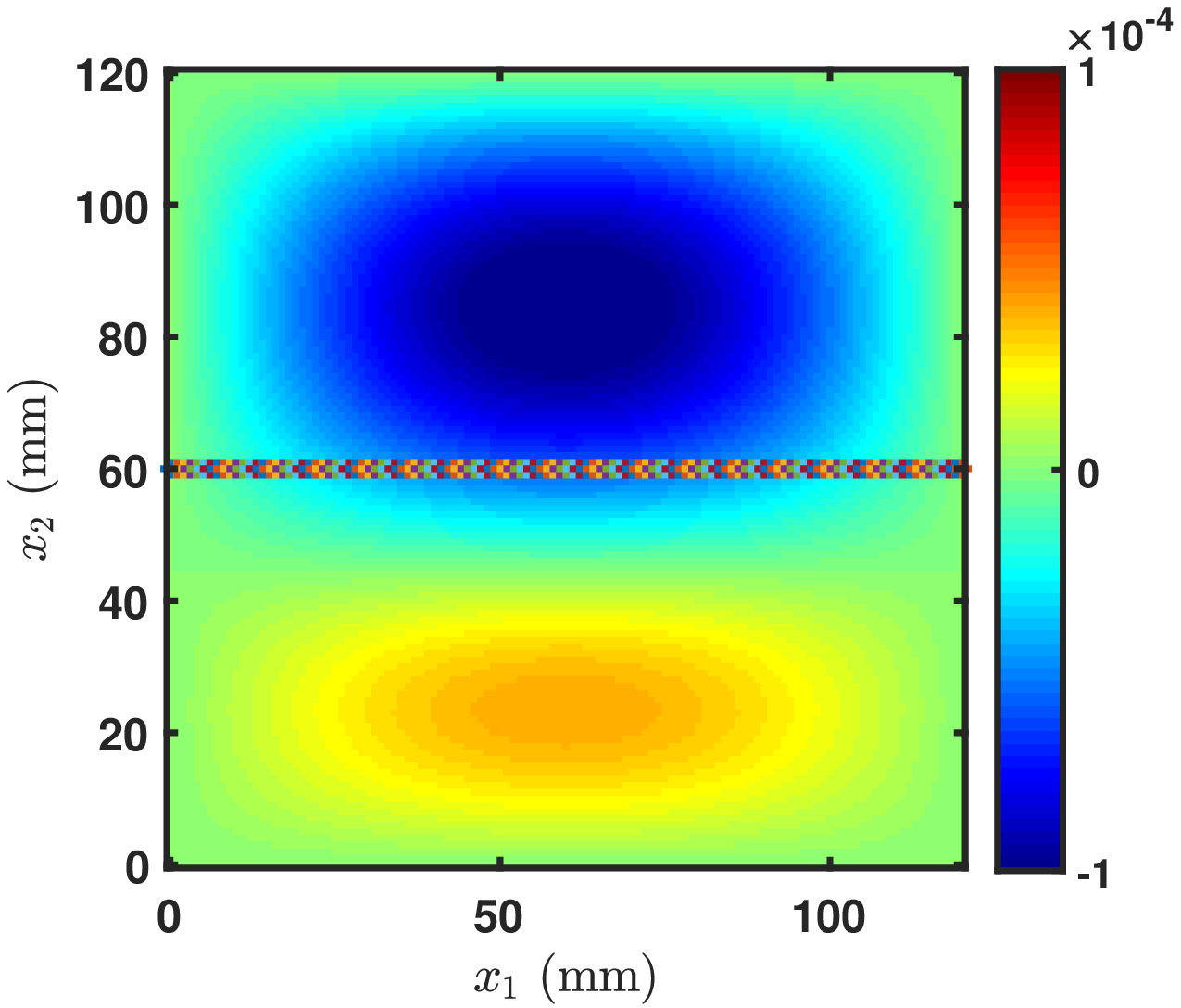}
	\caption{Reconstructed simulated data: (a) real part of  $u$ (mm); (b) imaginary part of  $u$ (mm).}\label{fig::lobsr}
\end{figure}
\begin{figure}[H]
	\centering
	\includegraphics[width=.6\textwidth]{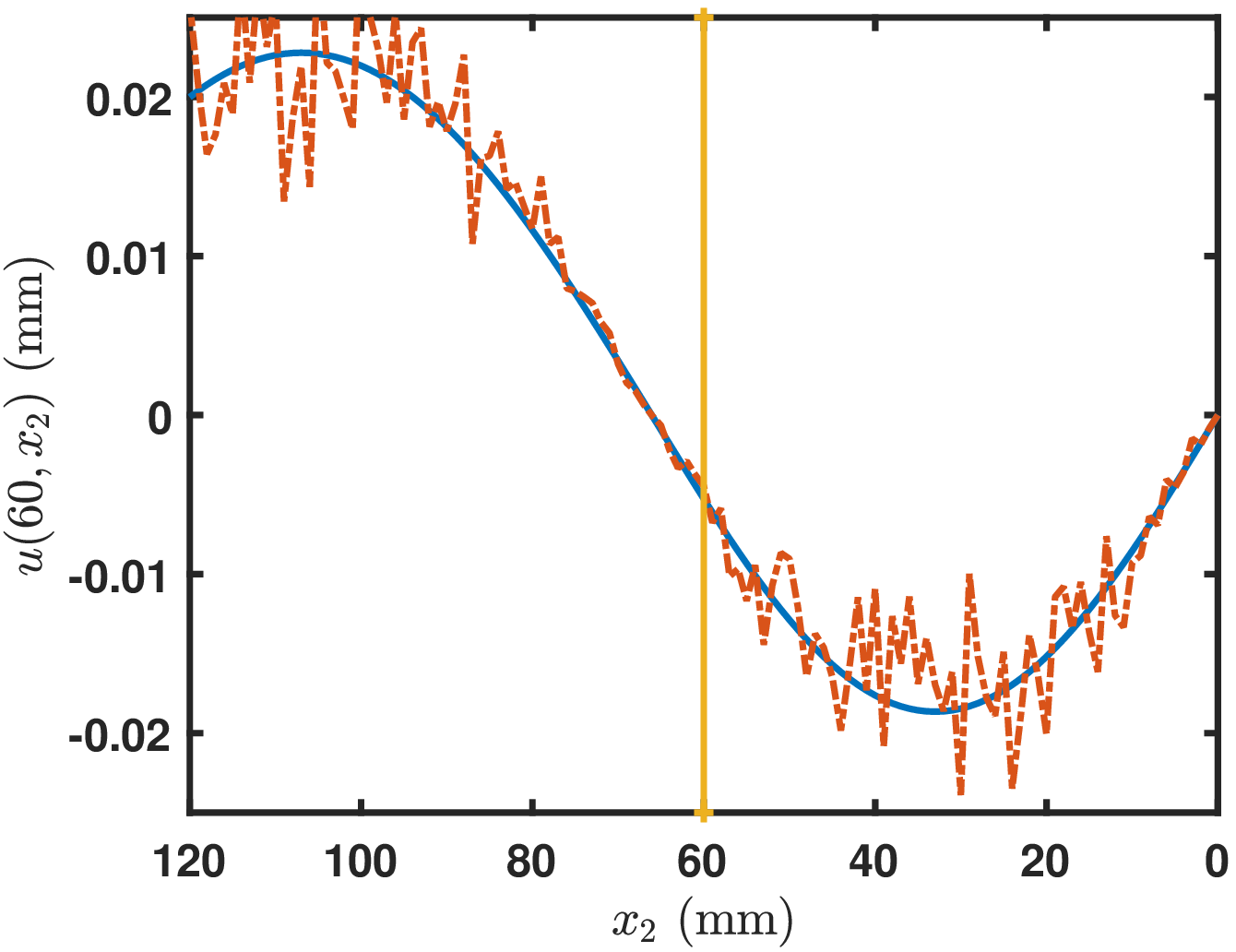}
	\caption{Reconstructed data along $x_1=60$ mm (blue line: without noise, red dot dash line: reconstructed).}\label{fig::lur60}
\end{figure}

\begin{figure}[H]
	\centering
	\includegraphics[width=.6\textwidth]{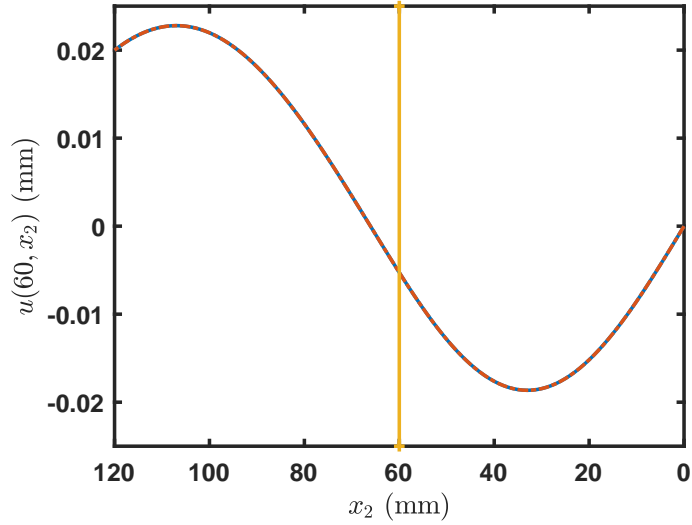}
	\caption{Simulated data along $x_1=60$ mm (real part of $u$, blue line: $G''=0$, red line: $G''\neq 0$).}\label{fig::uev60}
\end{figure}

It is interesting to observe here that the imaginary part of simulated data is quite small compared with the real part of simulated data which is almost the same as the one in Example 1.1 (see Figure \ref{fig::uev60}) . This is because the loss modulus $G''=0.4$ Pa$\cdot$s$\times\omega$$\approx$ 0.0503 kPa for layer 1 ($0.3$ Pa$\cdot$s$\times\omega$$\approx$ 0.0377 kPa for layer 2) is small while the storage modules is 20 kPa for layer 1 (10 kPa for layer 2). However, the existence of  loss modulus, no matter how small it is, enables us to choose some initial guess of $\gamma$ in Table \ref{table::lreconver} not so close to the exact value due to Theorem \ref{thm::convergence1} and Lemma \ref{lem::cone}. Further it is impossible to  recover neither $G'$ nor $G''$ reasonably well by using other existing methods, such as the modified integral method in \cite{JiangP} since there is less than half wave in each layer which cannot meet the requirement of our modified integral method. Similar result can either find in \cite{AmmN}.

\noindent {\small\bf Example 2.2: high frequency case:}
Let the angular frequency $\omega=250$ Hz. Use the simulated data without noise (see Figure \ref{fig::obs}) and the noisy simulated data with 20$\%$ relative Gaussian noise (see Figure \ref{fig::obs20} and Figure \ref{fig::u60}).
\begin{figure}[H]
	\centering
	(a)\includegraphics[width=.45\textwidth]{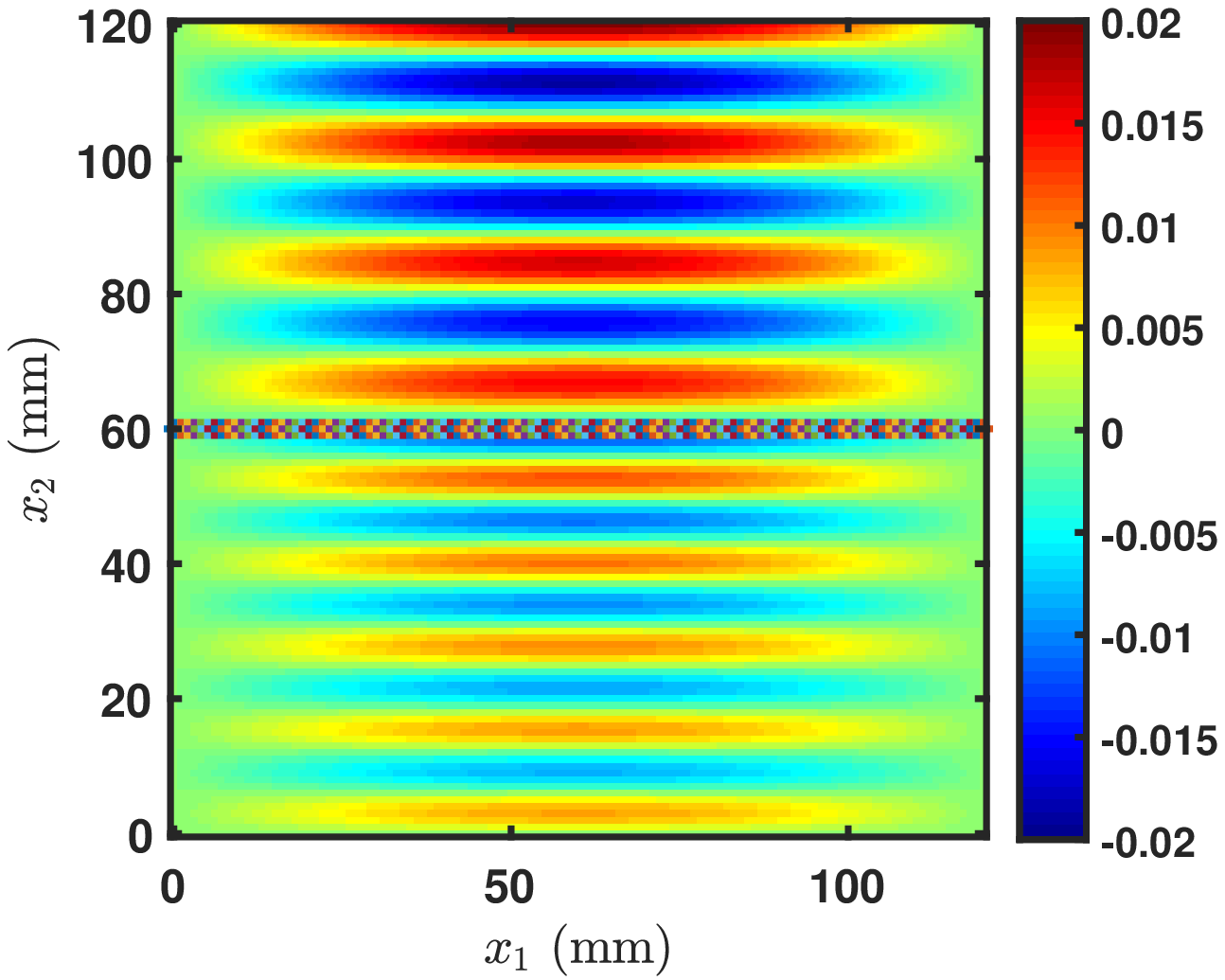}\,
	(b)\includegraphics[width=.45\textwidth]{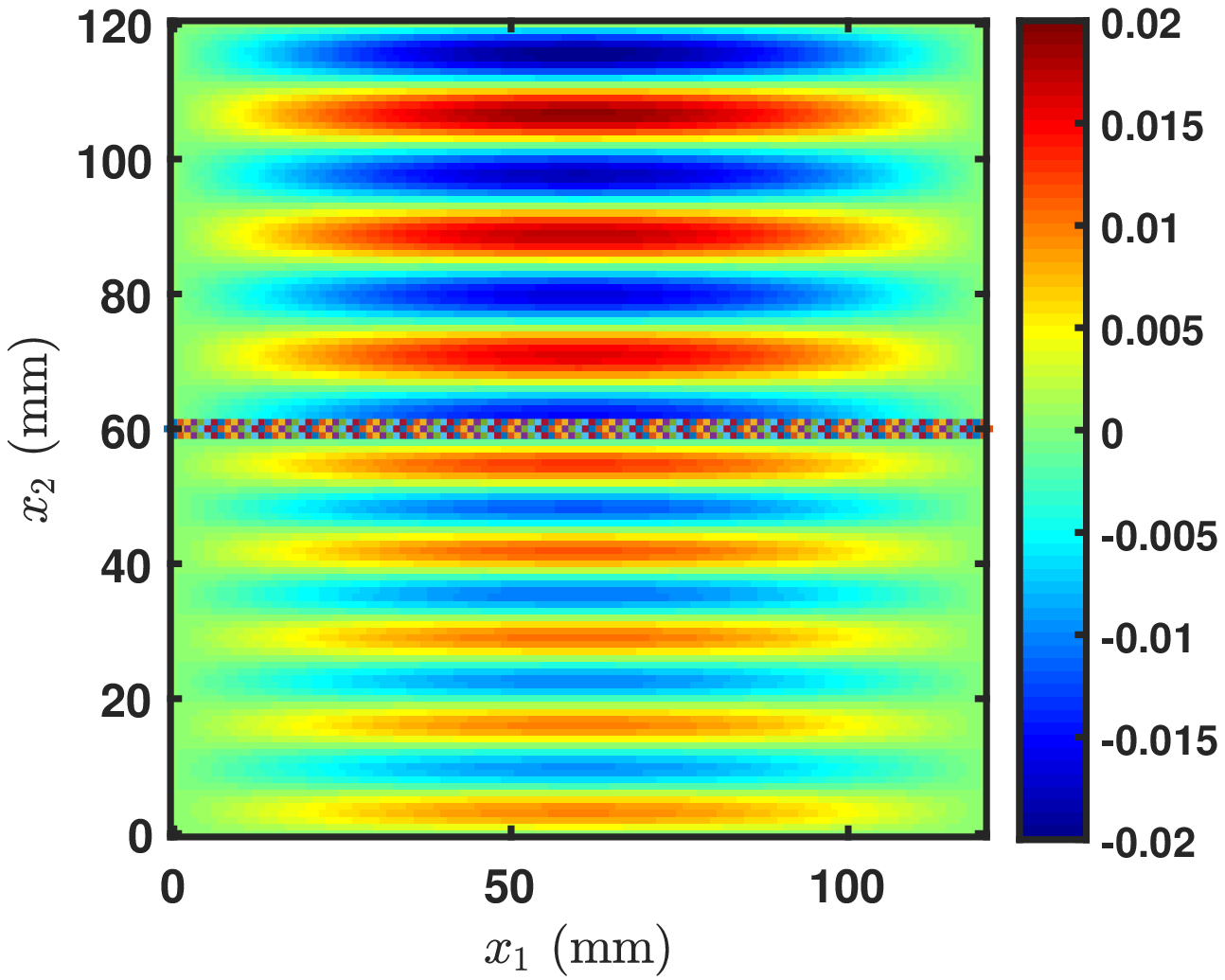}
	\caption{Simulated data without noise: (a) real part of  $u$ (mm); (b) imaginary part of  $u$ (mm).}\label{fig::obs}
\end{figure}
\begin{figure}[H]
	\centering
	(a)\includegraphics[width=.45\textwidth]{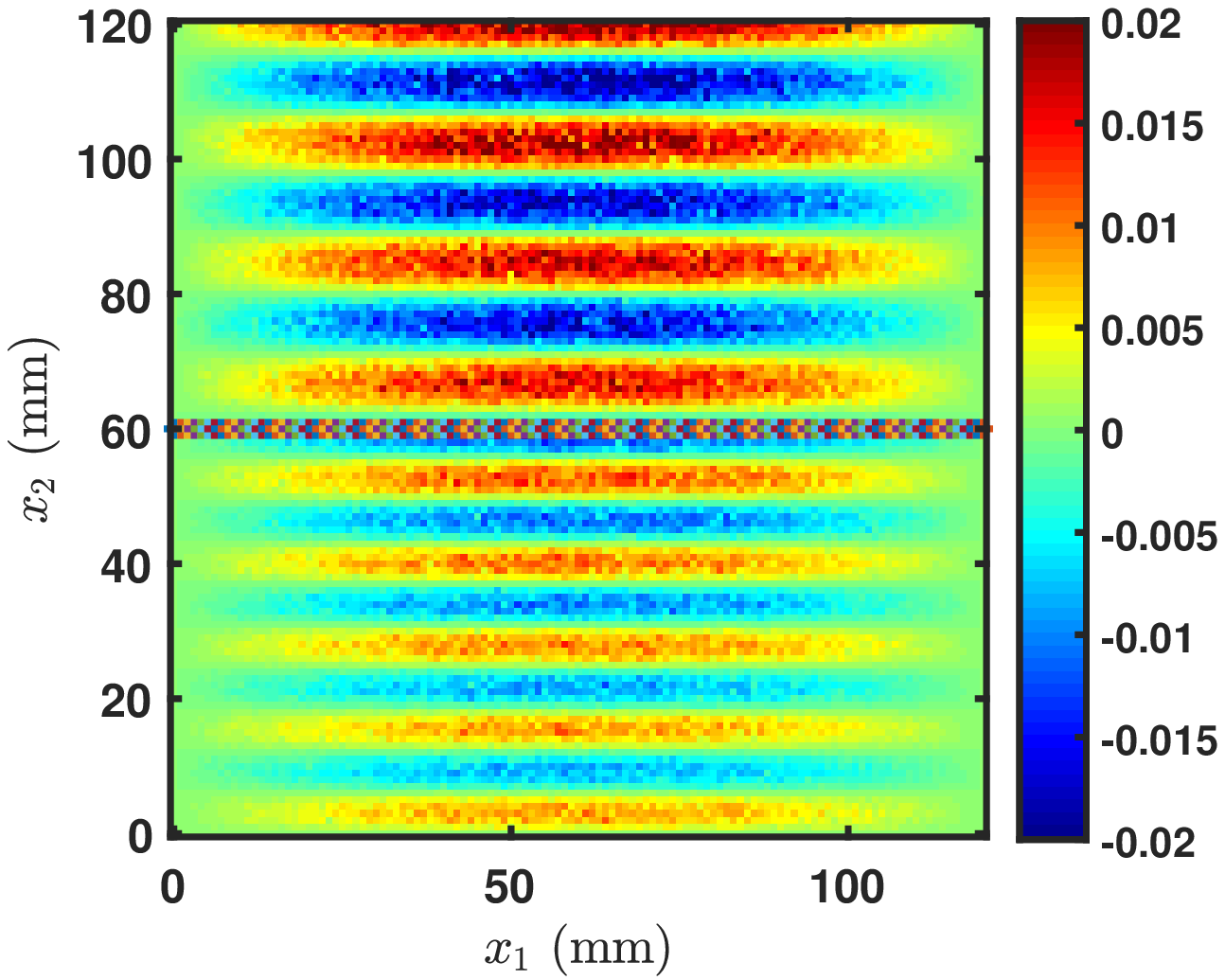}\,
	(b)\includegraphics[width=.45\textwidth]{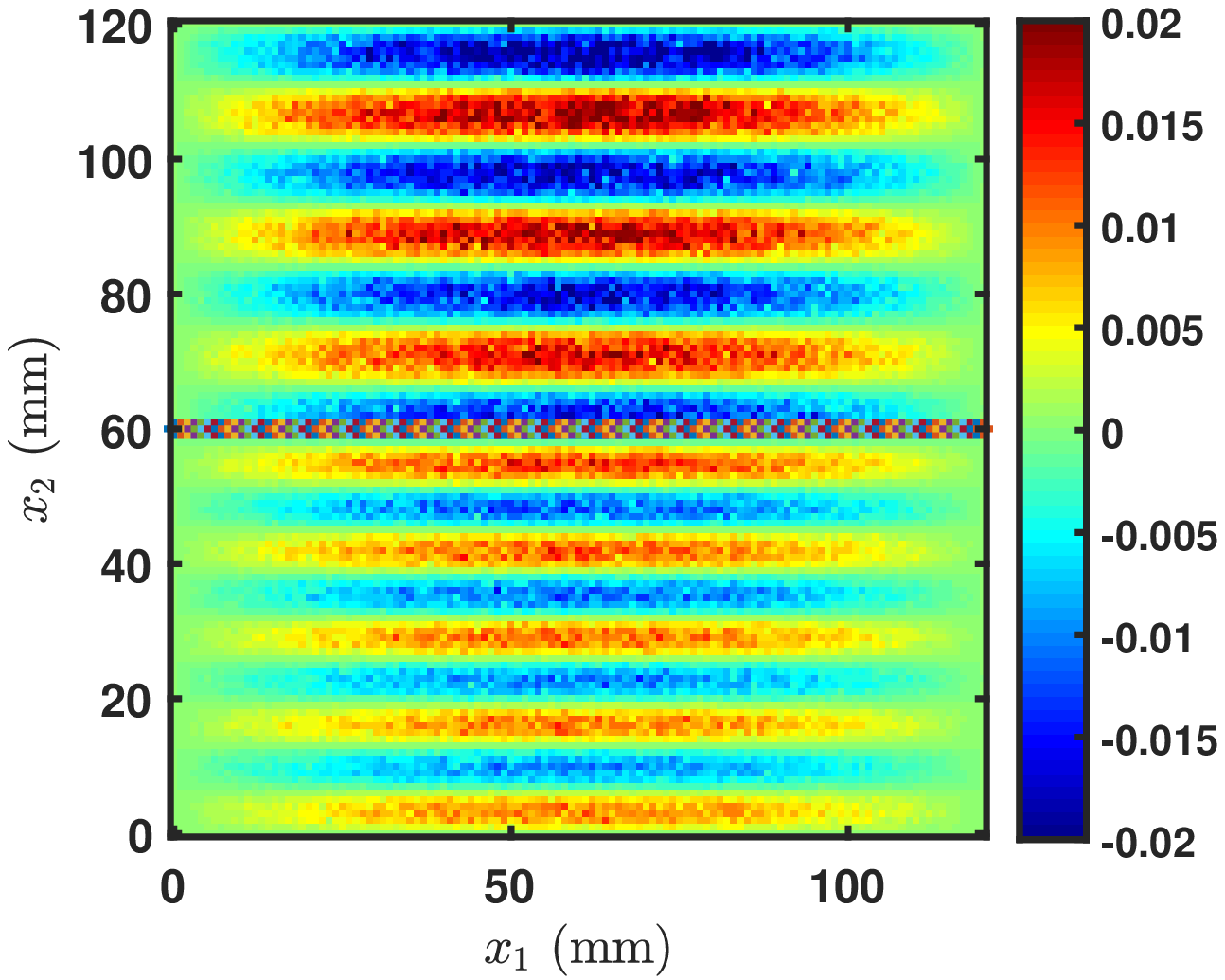}
	\caption{Noisy simulated data: (a) real part of  $u$ (mm); (b) imaginary part of  $u$ (mm).}\label{fig::obs20}
\end{figure}
\begin{figure}[H]
	\centering
	\includegraphics[width=.6\textwidth]{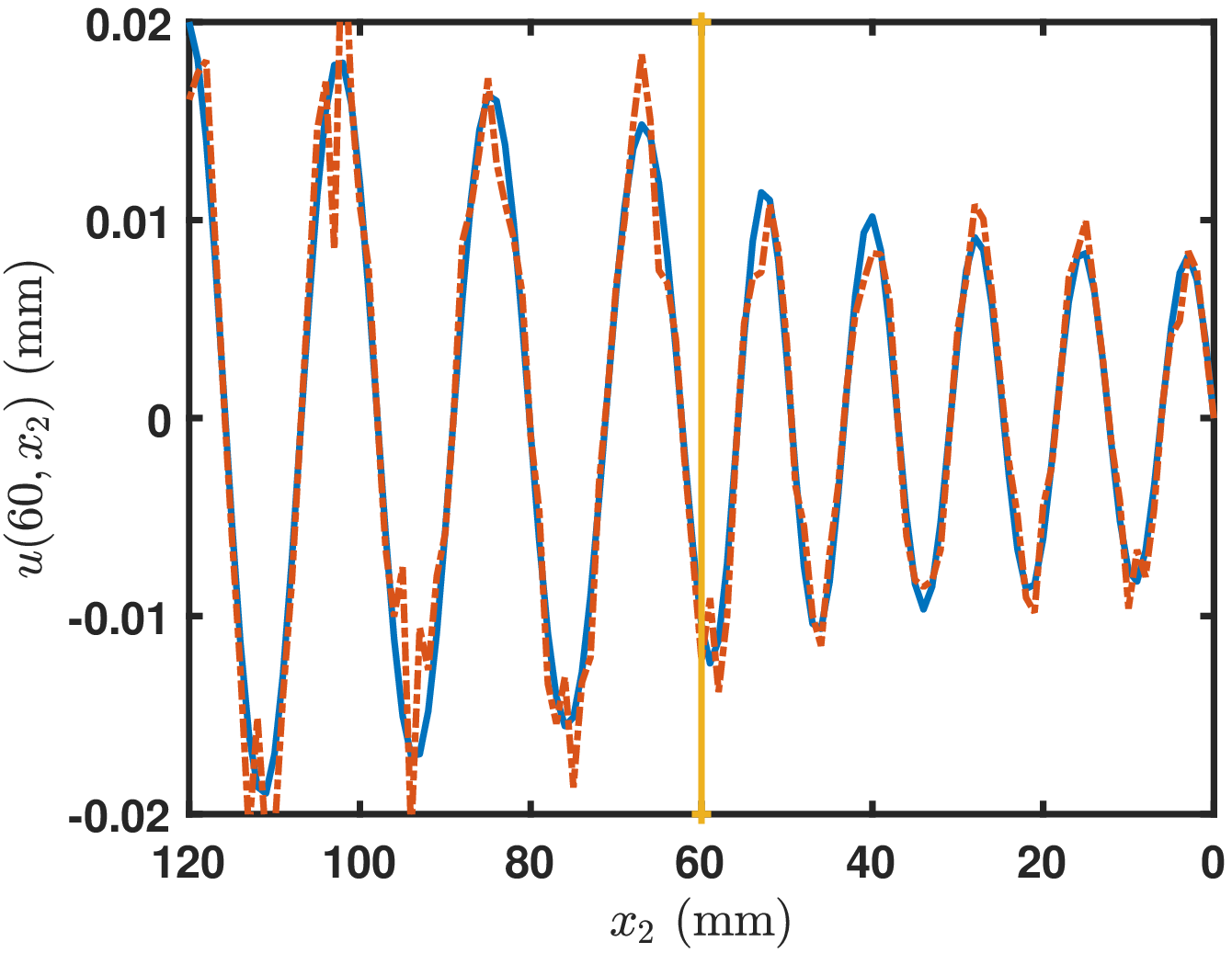}
	\caption{Simulated data along $x_1=60$ mm (blue line: without noise, red line: with 20$\%$ relative noise).}\label{fig::u60}
\end{figure}

The recovery of  $\gamma$ from noisy data is shown in Table \ref{table::reconver} and the reconstructed wave fields are shown in Figure \ref{fig::obsr} and Figure \ref{fig::ur60}. Again the recovery is very well. 
\begin{table}[H]
	\caption{Recovery of $\gamma$.}\label{table::reconver}
	\centering
	\tabcolsep=8pt
	\begin{tabular}{c||c|c||c|c}
		\hline
		\hline	
		Initial guess & $G'$ &  30 kPa & $G''$ & 0.5 Pa$\cdot$s$\times \omega$\\
		\hline
		\hline
		Layer 1 ($x_L<x_2<120$) & $G'$ &    19.9951 kPa & $G''$ & 0.3948 Pa$\cdot$s$\times \omega$\\
		\hline
		Layer 2  ($0<x_2<x_L$)& $G'$ & 9.9997 kPa & $G''$ & 0.3040 Pa$\cdot$s$\times \omega$\\
		\hline
		\hline
	\end{tabular}
\end{table}
\begin{figure}[H]
	\centering
	(a)\includegraphics[width=.45\textwidth]{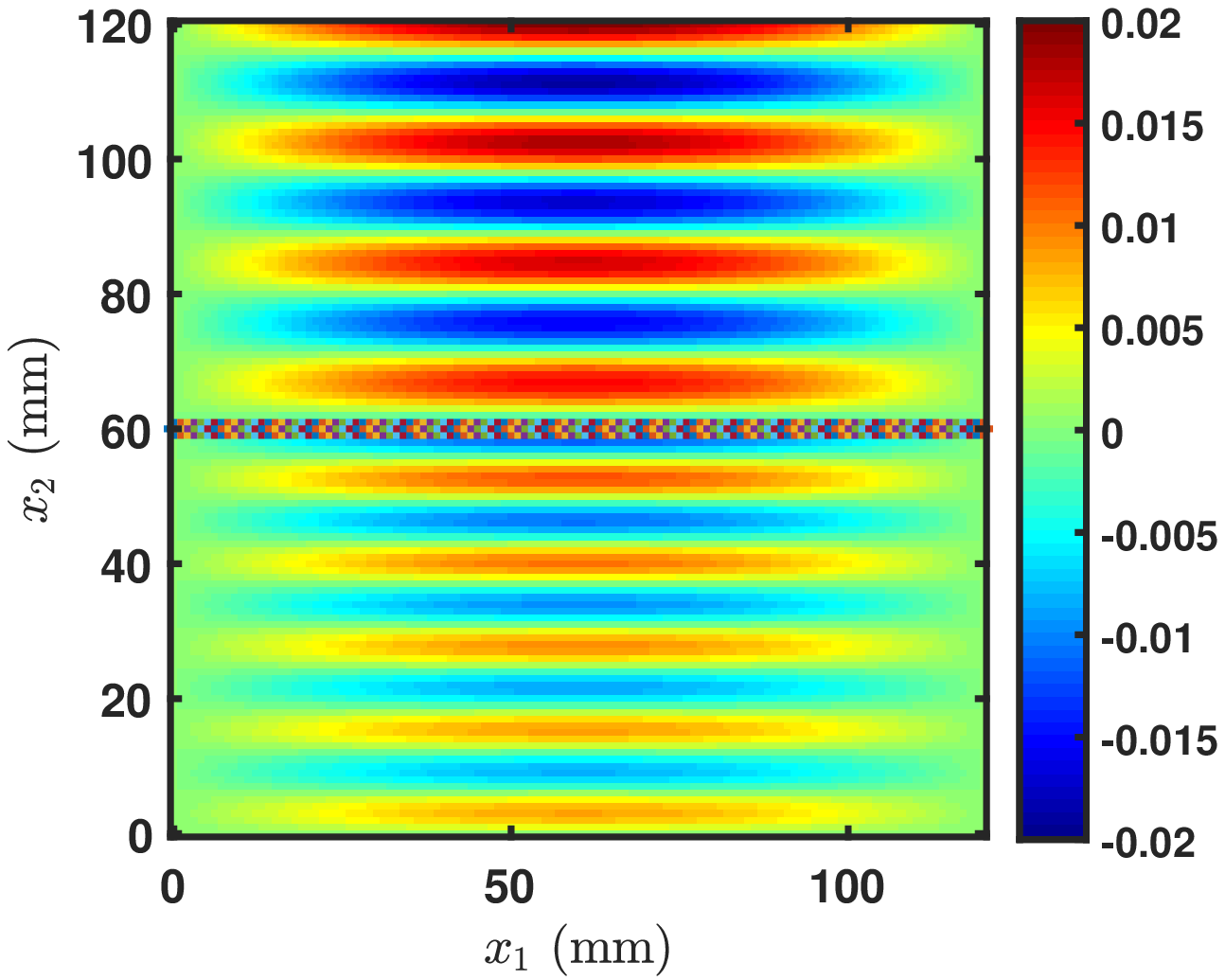}\,
	(b)\includegraphics[width=.45\textwidth]{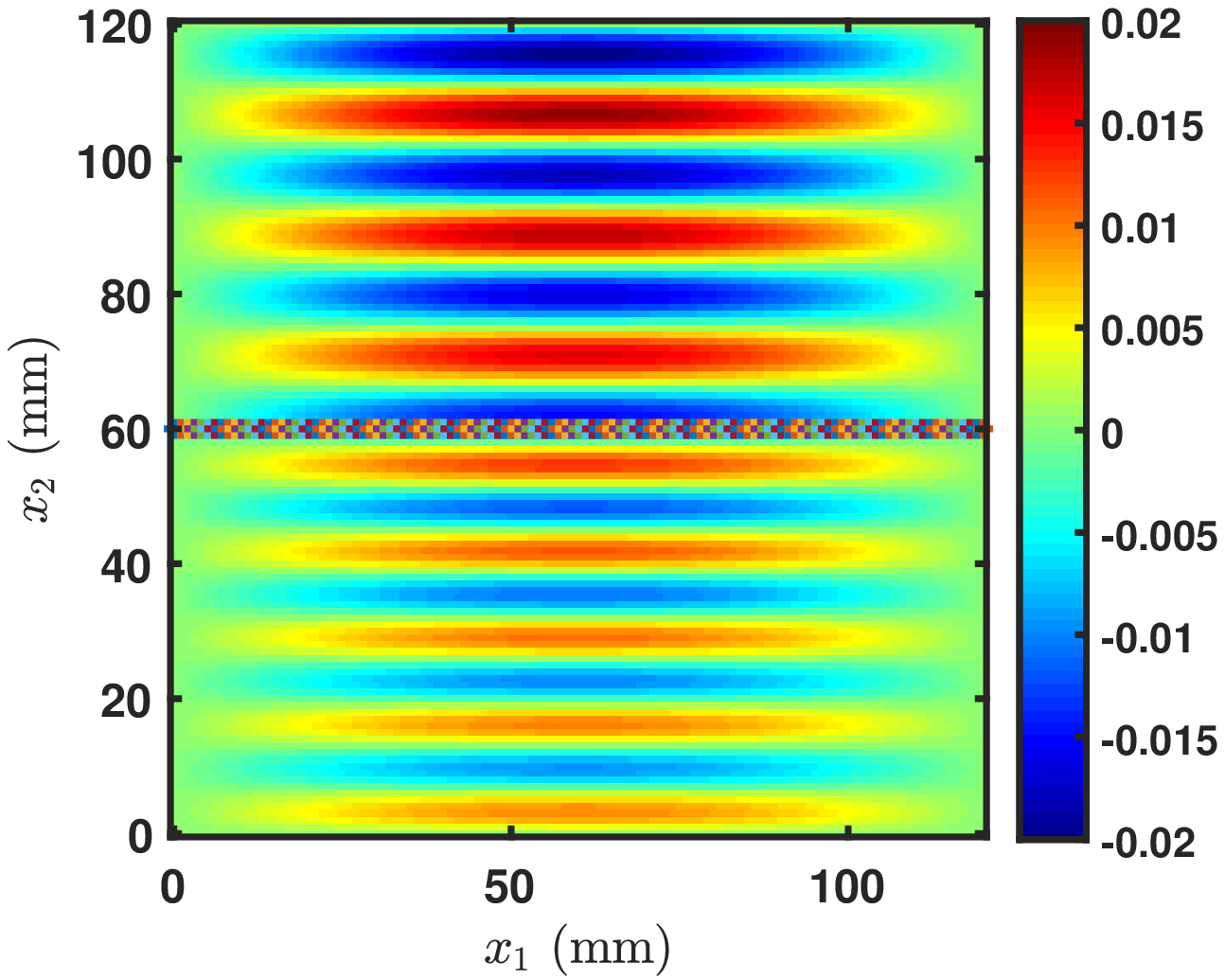}
	\caption{Reconstructed simulated data: (a) real part of  $u$ (mm); (b) imaginary part of  $u$ (mm).}\label{fig::obsr}
\end{figure}
\begin{figure}[H]
	\centering
	\includegraphics[width=.6\textwidth]{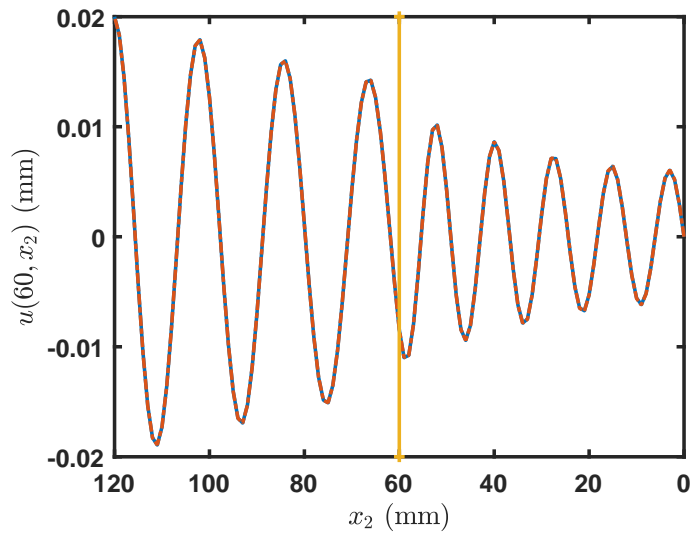}
	\caption{Reconstructed data along $x_1=60$ mm (blue line: without noise, red dot dash line: reconstructed).}\label{fig::ur60}
\end{figure}

Here, we applied the modified integral method to recover $\gamma$ from noisy data (see Figure \ref{fig::mim60}). The recovery of $G'$ is good, meanwhile the recovery of $G''$ is quite poor. 
\begin{figure}[H]
	\centering
	(a)\includegraphics[width=.45\textwidth]{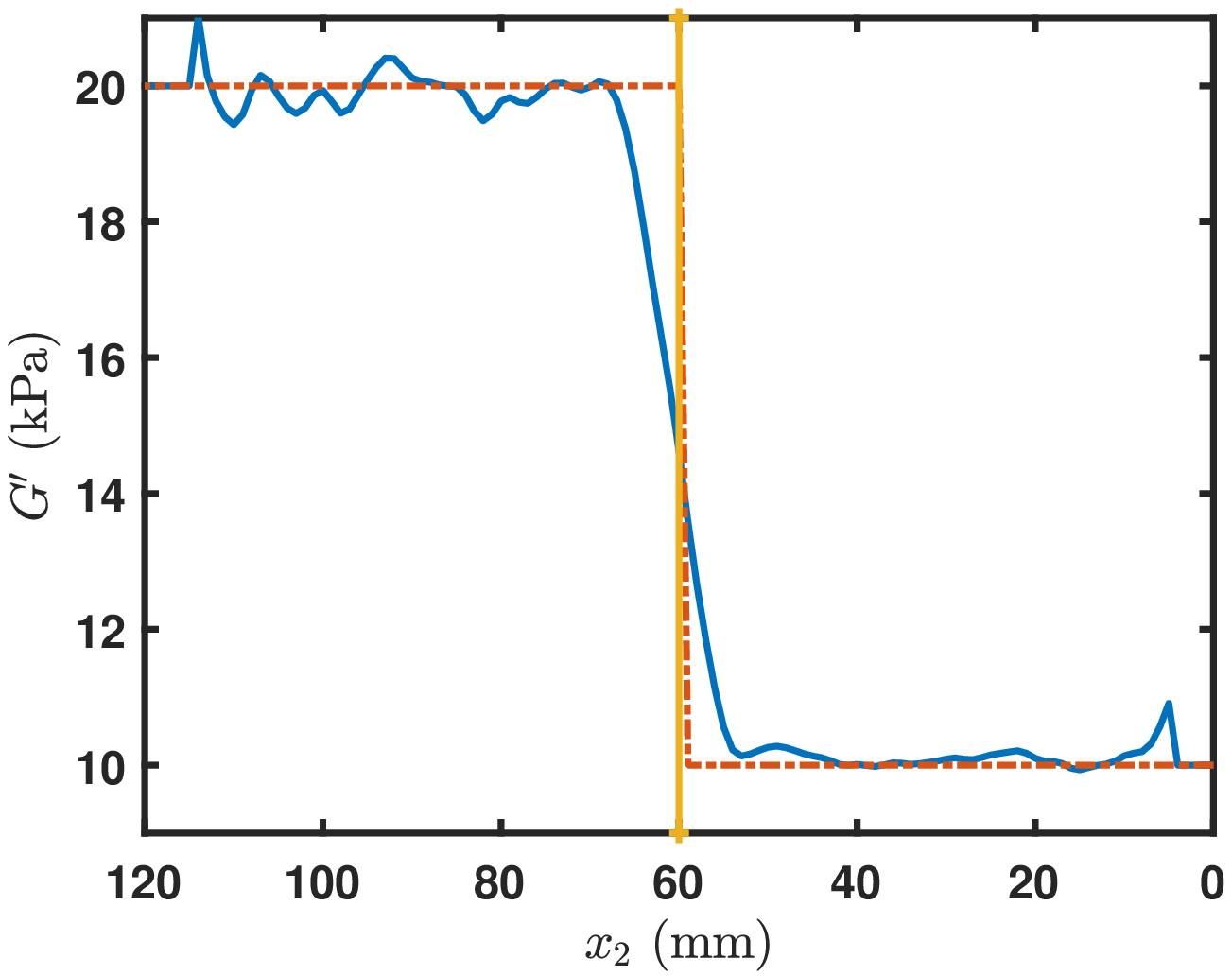}\,
	(b)\includegraphics[width=.45\textwidth]{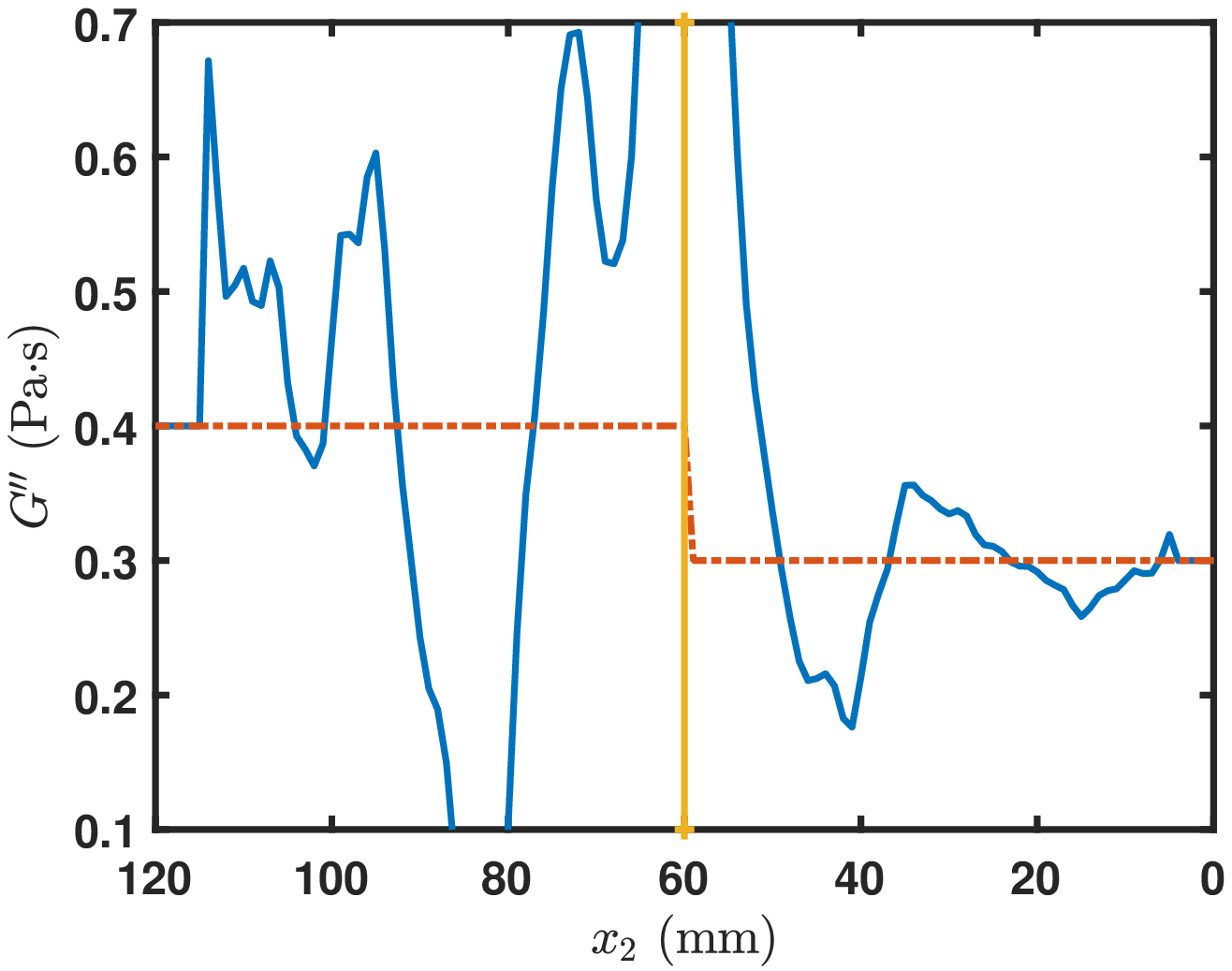}
	\caption{Recovery of $\gamma$ by using modified integral equation method along $x_1=60$ mm ((1) $G'$, (b) $G''$, blue line: recovery, red dot dash line: exact value).}\label{fig::mim60}
\end{figure}

\section{Discussions and conclusion}
We have shown that the convergence of Levenberg-Marquard method for an inverse problem with single interior measurement which arises in the data analysis for the magnetic resonance elastography  (MRE). The key for this was to show that the measurement map of MRE satisfies the tangential cone condition. A similar result can be obtained also for other coefficients identification problem by single interior measurement of solution to the boundary value problem for partial differential equation of divergence form such that the real part or the imaginary part of the associated sesquilinear form is positive. The numerical performance of this method was given for several cases and observed that it is quite well.
In particular the recovery of the loss modulus was very good compared with the other existing methods.
Based on these we conclude that this method has a strong potential to become one of a standard method for elastogram not only recovering the storage modulus but also the loss modulus.

\end{document}